\documentclass[a4paper,notitlepage,twoside,leqno,11pt]{amsart}
\usepackage{mathrsfs}
\usepackage{epic,eepic}
\usepackage{bbm,pifont,latexsym}
\usepackage{anysize}
\marginsize{2.4cm}{2.4cm}{2.4cm}{2.4cm}

\usepackage{dcolumn,indentfirst,color}
\usepackage[pdfpagemode=UseNone,pdfstartview=FitH]{hyperref}
\usepackage{amsmath,amssymb,amscd,amsthm,amsfonts,mathrsfs}
\usepackage{color,graphicx,xcolor,graphics}
\usepackage{tikz}
\usepackage{titlesec}
\usepackage{titletoc}
\titlecontents{section}[20pt]{\addvspace{2pt}\filright}
              {\contentspush{\thecontentslabel. }}
              {}{\titlerule*[8pt]{}\contentspage}

\usepackage{fancyhdr}
\fancyhf{}

\fancyhead[OR,EL]{\small\thepage}

\fancyhead[OC]{\small Impressions of non-recurrent critical portraits are trivial}
\fancyhead[EC]{\small YAN GAO AND JINSONG ZENG}

\pagestyle{fancy}

\newcommand{\wt}{\widetilde}

\newcommand{\mb}{\mathbb}
\newcommand{\mc}{\mathcal}
\newcommand{\sm}{\setminus}
\newcommand{\tu}{\textup}
\newcommand{\ol}{\overline}
\newcommand{\es}{\emptyset}
\newcommand{\mt}{\mapsto}

\newcommand{\tb}{\textbf}

\newcommand{\wh}{\widehat}
\newcommand{\olC}{\overline{\mathbb{C}}}

\newtheorem{thm}{Theorem}[section]
\newtheorem{lema}[thm]{Lemma}
\newtheorem{lem}[thm]{Lemma}

\newtheorem{prop}[thm]{Proposition}

\newtheorem{rmk}[thm]{Remark}
\newtheorem{defi}{Definition}[section]

\setlength{\footnotesep}{0.5cm}

\makeatletter\@addtoreset{equation}{section}\makeatother

\titleformat{\section}{\centering\normalsize}{\textsc{\thesection.}}{1em}{\textsc}
\titleformat{\subsection}{\normalsize}{\thesubsection.}{1em}{\textbf}

\title
 {The landing of parameter rays at non-recurrent critical portraits}

\author{Yan Gao}
\address{Yan Gao, School of Mathematics, Sichuan University, Chengdu 610064, China}
\email{gyan@scu.edu.cn}

\author{Jinsong Zeng}
\address{Jinsong Zeng, School of Mathematics and Information Science, Guangzhou University, Guangzhou 510006, China}
\email{jinsongzeng@163.com}

\subjclass[2010]{Primary: 37F45; Secondary: 37F10}

\keywords{critical portraits; non-recurrent; impressions;}


\begin{document}
\maketitle
\begin{abstract}
	Based on the distortion theory developed by Cui--Tan  \cite{CT15}, we prove the landing of every parameter ray at critical portraits coming from non-recurrent polynomials, thereby generalizing a result of  Kiwi \cite[Corollary]{Ki05}.
\end{abstract}
\section{Introduction}

In Complex Dynamicscomplex dynamics, a central topic is to study the dynamics of the quadratic family $\{f_c:z\mapsto z^2+c\}$, and the structure of the corresponding \emph{Mandelbrot set}
\[\mathcal{M}:=\{c\in\mathbb{C}\mid f_c^n(c)\not\to\infty\text{ as }n\to\infty\}.\]
A fundamental result in this aspect, due to Douady and Hubbard \cite{DH84}, is to describe the landing behavior of the rational \emph{parameter rays}, which are defined to be the images of $(1,\infty)e^{2\pi i\theta}$ under the Riemann mapping
\begin{equation}\label{eq:r-map}
\text{$\Phi:\mathbb{C}\setminus \mathcal{M}\to \mathbb{C}\setminus\ol{\mathbb{D}}$, \  with $\Phi(z)/z\to 1$ as $z\to\infty$}.
\end{equation}

In \cite{CT10}, the authors give an alternative proof of the Douady--Hubbard ray-landing theorem for quadratic Misiurewicz polynomials by using a new type of distortion theorem given in \cite{CT15}. Based on this distortion theorem and with a similar idea mentioned in \cite{CT10} and \cite[Section 8]{CT15}, we can generalize the ray-landing theorem to \emph{non-recurrent} polynomials with any degree $d\geq 2$.

Precisely,
let $\mc{P}_d\cong \mb{C}^{d-1}$ denote the parameter space of  monic, centered polynomials of degree $d$, i.e., $f(z)=z^d+a_{d-2}z^{d-2}+\cdots+a_0$. The \emph{connectedness locus} $\mc{C}_d$, comprising polynomials in $\mathcal{P}_d$ with all critical orbits bounded, is the generalization of the Mandelbrot set in higher degree case. In contrast, the \emph{shift locus} $\mc{S}_d$, formed by polynomials in $\mathcal{P}_d$ with all critical points escaping to infinity , is an analogy in the higher-degree case to the outside of the Mandelbrot set.

For $d\geq3$, a combinatorial quantity called \emph{critical portrait} is used to label the polynomials in place of \emph{character angles} in the quadratic case. Generally, a critical portrait associated with a polynomial is a collection of external angles corresponding to the critical points (see Section~\ref{sec:critical-portrait} for details). Let $\Theta$ be a critical portrait of degree $d$ and $r>0$ be any number. As proved in \cite[Theorem 3.7]{Ki05}, there exists a unique polynomial $f_r(\Theta)\in \mathcal{S}_d$ such that $\Theta$ is the critical portrait of $ f_r(\Theta)$ and the \emph{escaping rate} (see Section~\ref{sec:critical-portrait}) of each critical value is $r$, and that the map
\[R_{\mathcal{C}_d}(\Theta):(0,\infty)\to \mathcal{S}_d,\quad r\mapsto f_r(\Theta)\]
is injective. Thus, we call $R_{\mathcal{C}_d}(\Theta)$ the \emph{parameter ray} at the critical portrait $\Theta$. In case of $d=2$, a critical portrait is written as $\Theta=\{\theta/2,(\theta+1)/2\}$ for some $\theta\in \mathbb{R}/\mathbb{Z}$, and the parameter rays given above coincide with the ones defined by the Riemann mapping $\Phi$ in \eqref{eq:r-map}, namely
\[R_{\mathcal{C}_2}(\Theta)=R_{\mathcal{M}}(\theta):=\Phi^{-1}((1,\infty)e^{2\pi i\theta}).\]

In the quadratic case, Douady and Hubbard \cite{DH84} proved that every parameter ray at rational angles lands on $\mathcal{M}$. In the higher-degree case ($d\geq 3$), Kiwi\cite{Ki05} proved that parameter rays at strictly pre-periodic critical portraits land at a Misiurewicz polynomial. However, the landing of a parameter ray at a rational critical portrait $\Theta$ containing periodic angles remains unknown. In the present work, we show that every parameter ray at critical portraits coming from \emph{non-recurrent} polynomials will land. Our approach is similar to that of Cui--Tan \cite{CT10} but differs from that of Kiwi \cite{Ki05}.

A critical point $c$ of a rational map is said to be \emph{recurrent} if the orbit of $c$ is infinite and $c$ is an accumulation point of its orbit. A polynomial is called \emph{non-recurrent} if it has a connected Julia set but no bounded Fatou components and no recurrent critical points. For instance, a Misiurewicz polynomial is non-recurrent. Our main result is as follows.
\begin{thm}\label{main_thm}
	Let $\Theta$ be a critical portrait of a non-recurrent polynomial $f$ of degree $d\geq2$. Then the parameter ray $R_{\mathcal{C}_d}(\Theta)$ lands at $f$.
\end{thm}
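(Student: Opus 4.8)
The plan is to follow the strategy of Cui--Tan \cite{CT10}: parametrize the ray by $r\downarrow 0$, show that $f_r(\Theta)$ remains in a compact subset of $\mathcal{P}_d$, and identify every accumulation point with $f$. Concretely, fix a sequence $r_n\downarrow 0$ and set $f_n:=f_{r_n}(\Theta)\in\mathcal{S}_d$. The Green's function $G_n$ of $f_n$ has value $r_n$ on every critical value of $f_n$, hence value at most $r_n/d$ on every critical point, so the ``connectedness defect'' of $f_n$ tends to $0$; a standard Branner--Hubbard-type estimate then bounds the coefficients of $f_n$, and after passing to a subsequence we may assume $f_n\to g$ in $\mathcal{P}_d$. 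Since each critical point of $g$ is a limit of critical points of $f_n$ and the Green's function is jointly upper semicontinuous in the pair (parameter, point), every critical point of $g$ has zero Green's value, so $g$ has bounded critical orbits, i.e. $g\in\mathcal{C}_d$; being a limit of maps outside $\mathcal{C}_d$, in fact $g\in\partial\mathcal{C}_d$, so $g$ carries no attracting cycle.

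Next I would pin down the combinatorics of $g$. The external rays of $f_n$ at the angles occurring in $\Theta$ reach the critical points of $f_n$, and stability of rational rays under the convergence $f_n\to g$ shows that the corresponding rays of $g$ land and that $\Theta$ is a critical portrait of $g$; one must exclude the $\Theta$-rays of $g$ ``crashing'' prematurely into precritical points, for which I would use that at any fixed depth $g$ has only finitely many precritical points and that the combinatorial location of the $\Theta$-rays is preserved in the limit. The quantitative engine here is the Cui--Tan distortion theorem \cite{CT15}: applied to pullbacks of a fixed round disc along orbit segments that avoid a neighbourhood of the (finite) post-critical set of $f_n$, it yields geometric control of these pullbacks that is \emph{uniform in $n$} and therefore passes to the limit $g$. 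This uniform control is precisely what keeps $g$ from being wilder than $f$: it rules out bounded Fatou components (which would produce, in the limit, a pullback of bounded shape but unbounded multiplicity, violating the distortion bound) and rules out a recurrent critical point, so $g$ is itself non-recurrent with critical portrait $\Theta$.

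It then remains to invoke rigidity: a non-recurrent polynomial in $\mathcal{P}_d$ is determined by its critical portrait. For strictly preperiodic $\Theta$ this is Kiwi's theorem \cite{Ki05}; in general it follows from the Yoccoz-type puzzle for non-recurrent maps combined with the Cui--Tan distortion theorem, which forces the puzzle pieces to shrink and hence produces a conjugacy $f\to g$ respecting external rays, necessarily the identity after the monic, centered normalization. Thus $g=f$. Since every subsequential limit of $f_r(\Theta)$ as $r\downarrow0$ equals $f$ and the set $\{f_r(\Theta):0<r\le 1\}$ is relatively compact, the accumulation set of the parameter ray as $r\downarrow0$ is the single point $f$, which is exactly the statement that $R_{\mathcal{C}_d}(\Theta)$ lands at $f$ (equivalently, that the impression of $\Theta$ is trivial).

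The main obstacle is the middle step. Compactness and membership of the limit in $\mathcal{C}_d$ are soft, and, once the distortion theorem is available, the concluding rigidity argument is fairly standard puzzle technology; but showing that the accumulation polynomial $g$ does not acquire extra Fatou components or extra recurrence — equivalently, that the impression of $\Theta$ consists only of non-recurrent polynomials realizing $\Theta$ — is where the full force of the Cui--Tan distortion theory must be used, and where the degenerations that have to be excluded (rays collapsing to common landing points, a hidden parabolic or Cremer cycle, wandering of the limiting critical orbit) require the most care.
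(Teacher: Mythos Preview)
Your proposal takes a genuinely different route from the paper, and the difference is not cosmetic: it is precisely the step you flag as ``the main obstacle'' that the paper's argument is designed to avoid.

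The paper never analyses the dynamics of a subsequential limit $g$ of $f_r$. Instead it works outward from the \emph{known} non-recurrent map $f$: perform a quasiconformal surgery on $f$ sending each critical value a distance $r$ out along its ray, obtaining a quasiregular map $F_r$ that (i) is $c$-equivalent to $f_r$ via a holomorphic (near $\infty$) pair $(\phi_{r,0},\phi_{r,1})$, and (ii) agrees with $f$ outside small discs around $\mathrm{Crit}(f)$. Running the Thurston pullback on $F_r$ produces normalising maps $\eta_{r,n}$ that are univalent off $\bigcup_{i\le n} f^{-i}(W_r)$, where $W_r$ is a shrinking neighbourhood of $\mathrm{Crit}(f)\subset\mathcal{J}_f$. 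This is exactly the configuration to which the nested-disk distortion theorem (Theorem~\ref{thm:Cui_Tan}) applies, with $X_0=\mathrm{Crit}(f)$ inside the \emph{fixed} map $f$. The conclusion is that $\eta_{r,n}\to\mathrm{id}$ near $\infty$ uniformly in $n$ as $r\to 0$; combined with the $c$-equivalence this forces $\chi_r f_r\chi_r^{-1}\to f$ for some affine $\chi_r$, and a short computation with the monic--centered normalisation gives $\chi_r\to\mathrm{id}$, hence $f_r\to f$.

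Two concrete gaps in your plan. First, the Cui--Tan distortion theorem, as developed here, is a statement about univalent maps defined off pullback discs of a \emph{single} rational map without recurrent critical points; it does not give uniform-in-$n$ Ma\~n\'e-type control of pullbacks for the varying shift-locus maps $f_n$, nor can it be applied to $g$ before you know $g$ is non-recurrent, so your ``uniform in $n$'' claim has no source. Second, for a non-recurrent (as opposed to Misiurewicz) $f$ the critical orbits may be infinite and the angles in $\Theta$ are in general irrational, so ``stability of rational rays under $f_n\to g$'' is unavailable, and showing that the $\Theta$-rays of $g$ even land at critical points---let alone that $g$ has no bounded Fatou components, no Cremer or parabolic cycles, and no recurrent critical point---is essentially the whole problem. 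The paper's surgery/Thurston approach sidesteps all of this by never needing any dynamical information about $g$: the only map to which the distortion machinery is applied is $f$ itself, whose non-recurrence is a hypothesis.
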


\vskip 0.5cm
\tb{Notations.} We list few notations, herein, that are used throughout this paper.

- Let $\mathbb{C}$, $\olC$, and $\mathbb{C}^*$ denote the complex plane, the Riemann sphere, and the puncture plane, respectively.

- Let $\mathbb{D}:=\{z\in\mathbb{C}:|z|<1\}$, $\mathbb{D}(R):=\{z\in\mathbb{C}:|z|<R\}$ and $\mathbb{T}:=\mathbb{R}/\mathbb{Z}$. By abuse using of notations, we identify the unit circle with $\mathbb{T}$ by identifying a point with its argument.

Let $z\in\olC$, $E\subseteq \olC$, and $r>0$. We denote $B(z,r):=\{x\in\olC; \tu{dist}(x,z)<r\}$ and $\tu{diam}\, E:=\tu{sup}\{\tu{dist}\,(x,y);x,y\in E\}$, where $\tu{dist}\,(\,,\,)$ is the spherical metric. Furthermore, $B_e(z,r),{\rm diam}_e$ and ${\rm dist}_e(,)$ refer to the corresponding quantities in the Euclidean metric.

- In the present paper, with no special emphasis, the convergence of maps or points on $\olC$ or on a subset of $\olC$ is under the spherical metric.

- Let $S$ be a subset of $\olC$ and $f$ be a rational map. Then, $\tu{Orb}(S):=\cup_{k\geq 0}f^k(S)$. Note that here we require $k\geq 0$.

- Let $S$ be an open subset of $\olC$. Then, the collection of all components of $S$ is denoted by $\tu{Comp}(S)$.

- The Julia set and the Fatou set of a rational map $f$ are denoted by $\mc{J}_f$ and $\mc{F}_f$, respectively.

\section{Preliminaries}

\subsection{Modulus of an annulus}

Let $A\subseteq \olC$ be an annulus, such that each of its two complementary components contain at
least two points. Then, there exists a constant $r>1$ and a conformal map $\chi_A:A\to A(r)$,
where $A(r):=\{z:1<|z|<r\}$,
such that $r$ is unique and $\chi_A$ is unique up to post-composition of a rotation. The \emph{modulus} of $A$ is defined as
\[{\rm mod}\,A:=(\log r)/2\pi.\]

\subsection{Various distortions}
Let $U$ be a domain in $\olC$ and $z\in U$. The $\emph{Shape}$ of $U$ about $z$ is defined as
$$\tu{Shape}(U,z)=\frac{\tu{max}_{w\in\partial U}\tu{dist}(w,z)}{\tu{min}_{w\in\partial U}\tu{dist}(w,z)}.$$
It is obvious that $B(z,r)\subseteq U\subseteq B(z,kr)$ for some $r$, where $k:=\tu{Shape}(U,z)$. Thus, $U$ is a round disk centered at $z$ if and only whenif $\tu{Shape}(U,z)=1$.

In fact, the area of $U$ can be estimated by its Shape and diameter. To check this, by pre-composition of a rotation, one can assume $U\subseteq \mathbb{C}$, (any rotation retains the spherical metric) such that the Euclidean metric and the spherical metric are compatible in $U$. Thus, there is a constant $M$, depending only on $U$, such that
\[\frac{1}{M^2}\leq \frac{{\rm Shape}(U,z)}{{\rm Shape}_e(U,z)}\leq M^2 {\ \rm and\ } \frac{1}{M}\leq\frac{{\rm diam}(U)}{{\rm diam}_e(U)}\leq M.\]
A simple calculation shows that
\begin{equation}\label{eq:1}
\frac{\pi}{4}\frac{1}{M^4}\frac{{\rm diam}^2(U)}{{\rm Shape}^2(U,z)}\leq{\rm Area}_e(U)\leq\frac{\pi}{4} M^4{\rm Shape}^2(U,z){\rm diam}^2(U)
\end{equation}
and
\begin{equation}\label{eq:2}
\frac{\pi}{4}\frac{1}{M^6}\frac{{\rm diam}^2(U)}{{\rm Shape}^2(U,z)}\leq{\rm Area}(U)\leq\frac{\pi}{4} M^6{\rm Shape}^2(U,z){\rm diam}^2(U).
\end{equation}
\begin{lema}\label{lema_area}
	Let $W$ and $W'$ be two nested Jordan domains in $\olC$ such that $\ol{W'}\subseteq W$. Let $m:={\rm mod} (W\setminus \ol{W'})$ and $C>1$ be a constant. Then,
	there exists $\lambda\in(0,1)$, depending only on $W$, $m$, and $C$, such that for any univalent mapping $h:W\to \mathbb{C}^*$ and any two nested open disks $E\subseteq U$ in $W'$ with
	$$\tu{Shape}(E,x)\leq C\tu{ for some $x\in E$\ \ and }\tu{diam}\,U\leq C\cdot\tu{diam}\,E,$$
	the following inequality
	$$\tu{Area}\,(\rho_*,h(E))\geq \lambda\, \tu{Area}\,(\rho_*,h(U))$$
	holds, where $\tu{Area}(\rho_*,S):=\iint_S\frac{1}{4\pi^2|w|^2}d\zeta d\eta$ with $\rho_*(w):=\frac{1}{2\pi|w|}$ is a planar metric on $\mathbb{C}^*$.
\end{lema}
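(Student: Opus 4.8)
The plan is to reduce everything to a statement about the planar metric $\rho_*$ on $\mathbb{C}^*$ and to exploit two invariances: first, that $\mathrm{Area}(\rho_*,\cdot)$ is invariant under multiplication by nonzero constants (since $\rho_*(w)=\frac{1}{2\pi|w|}$ scales correctly), so one may normalize $h$; and second, that the quantity $\lambda$ we seek depends only on the \emph{conformal geometry} of the configuration $E\subseteq U\subseteq W'\subseteq W$ together with the metric distortion that $h$ can introduce, which in turn is controlled by $m=\mathrm{mod}(W\setminus\ol{W'})$. First I would invoke the Koebe distortion theorem in the following form: since $h$ is univalent on $W$ and $W'$ is compactly contained in $W$ with $\mathrm{mod}(W\setminus\ol{W'})=m$, the ratio $|h'(z_1)|/|h'(z_2)|$ for $z_1,z_2\in\ol{W'}$ is bounded above and below by a constant $K=K(W,m)$; more precisely the nonlinearity of $h$ on $W'$ is controlled, so $h$ restricted to any disk inside $W'$ is a $K$-quasi-homothety.

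The key steps, in order, are as follows. Step 1: Normalize. Replace $h$ by $h/c$ for a suitable constant $c$ (or use the invariance of $\mathrm{Area}(\rho_*,\cdot)$ under rotation-scaling) so that, say, $|h'(x)|=1$; this changes neither side of the desired inequality. Step 2: By Koebe applied on $W\supseteq\ol{W'}$, obtain $1/K\le|h'(z)|\le K$ for all $z\in W'$, with $K=K(W,m)$. Hence for any measurable $S\subseteq W'$,
\[
\frac{1}{K^2}\,\mathrm{Area}_e(S)\;\lesssim\;\mathrm{Area}(\rho_*,h(S))\;\lesssim\;K^2\,\mathrm{Area}_e(S),
\]
where the suppressed constants also absorb the fact that $h(W')$ is a bounded region on which $|w|$ is pinched between two positive constants (again by Koebe, $h(W')$ has bounded diameter and is bounded away from $0$ because $h$ is univalent into $\mathbb{C}^*$ and $W'$ is relatively compact — here one uses that $0\notin h(W)$, so $h(W')$ stays a definite distance from $0$ relative to its own diameter). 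Step 3: Combine with $(\ref{eq:1})$: since $\mathrm{Shape}(E,x)\le C$ and $\mathrm{diam}\,U\le C\cdot\mathrm{diam}\,E$, inequality $(\ref{eq:1})$ gives $\mathrm{Area}_e(E)\ge \frac{\pi}{4M^4 C^2}\mathrm{diam}^2(E)$ and $\mathrm{Area}_e(U)\le\frac{\pi}{4}M^4 C'^2\,\mathrm{diam}^2(U)\le\frac{\pi}{4}M^4 C'^2 C^2\,\mathrm{diam}^2(E)$ for the appropriate Shape bound $C'$ on $U$ — but note $\mathrm{Shape}(U,x)$ is \emph{not} assumed bounded, only $\mathrm{diam}\,U\le C\,\mathrm{diam}\,E$; so instead I bound $\mathrm{Area}_e(U)\le\frac{\pi}{4}\mathrm{diam}_e^2(U)\le\frac{\pi}{4}M^2\,\mathrm{diam}^2(U)\le\frac{\pi}{4}M^2C^2\,\mathrm{diam}^2(E)$, using only that any planar set lies in a Euclidean disk of radius $\le\mathrm{diam}_e(U)$. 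Thus $\mathrm{Area}_e(U)\le C_1\,\mathrm{Area}_e(E)$ with $C_1=C_1(W,C)$. Step 4: Chain the estimates: $\mathrm{Area}(\rho_*,h(U))\lesssim K^2\mathrm{Area}_e(U)\le C_1 K^2\mathrm{Area}_e(E)\lesssim C_1 K^4\,\mathrm{Area}(\rho_*,h(E))$, which yields the claim with $\lambda=\lambda(W,m,C)$.

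The main obstacle — and the only genuinely delicate point — is Step 2's assertion that $h(W')$ stays a controlled distance away from the puncture $0$, relative to its diameter, with a bound depending only on $W$ and $m$. This is where the hypothesis $h:W\to\mathbb{C}^*$ (rather than into $\mathbb{C}$) is essential and where the modulus $m$ enters quantitatively: the annulus $W\setminus\ol{W'}$ maps under $h$ to an annulus of modulus $m$ separating $h(W')$ from $\infty$ and from $0$ (the puncture), and a Teichmüller/Grötzsch-type extremal length estimate bounds $\mathrm{dist}_e(0,h(W'))/\mathrm{diam}_e(h(W'))$ from below in terms of $m$ alone. Equivalently, one normalizes $h$ so that $h(W')$ has diameter $1$ and argues by a normal-families/compactness argument: the family of such $h$ (univalent on $W$, omitting $0$, normalized) is precompact, and no limit can have $0\in\ol{h(W')}$, because the omitted value $0$ would then fail to be separated from $h(W')$ by an annulus of modulus $m$. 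Once that uniform lower bound on $\min_{w\in h(W')}|w|$ (and the matching upper bound on $\max_{w\in h(W')}|w|$, which is easier) is in hand, the metric $\rho_*$ is comparable to a constant multiple of the Euclidean metric throughout $h(W')$, and the rest is the routine area-comparison bookkeeping sketched above.
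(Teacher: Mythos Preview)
Your proposal is correct and uses exactly the same three ingredients as the paper's proof: the Euclidean comparison $\mathrm{Area}_e\,U\le C_1\,\mathrm{Area}_e\,E$ from the Shape/diameter hypotheses and~\eqref{eq:1}, the Koebe bound on $\max_{W'}|h'|/\min_{W'}|h'|$, and the bound on $\max_{W'}|h|/\min_{W'}|h|$ coming from the fact that $0\notin h(W)$ is separated from $h(W')$ by an annulus of modulus $m$. The paper simply writes $\mathrm{Area}(\rho_*,h(S))=\iint_S\frac{|h'(z)|^2}{4\pi^2|h(z)|^2}\,dx\,dy$ and compares the integrands for $S=E$ and $S=U$ directly, arriving at $\lambda=\bigl(C_1C_2^4(1+C_3)^2\bigr)^{-1}$, rather than normalizing first.

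One caveat on your writeup: the normalization $|h'(x)|=1$ does \emph{not} by itself pin $|h(z)|$ between two absolute constants on $W'$ (take $h(z)=z+N$ on the unit disk with $N$ large), so the displayed two-sided estimate in Step~2 is not literally true with uniform implied constants. What is true --- and what Step~4 actually uses --- is that the \emph{ratio} $\max_{W'}|h|/\min_{W'}|h|$ is bounded in terms of $m$ alone (this is precisely your ``definite distance from $0$ relative to its own diameter'' observation, and is the paper's constant $1+C_3$), so the $h$-dependent factors cancel when you pass from $U$ to $E$. The paper sidesteps this slip by never normalizing and working with ratios throughout; you can repair your version either by doing the same, or by carrying a common factor $|h'(x)|^2/|h(x)|^2$ through Step~2 and letting it cancel in Step~4.
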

\begin{proof}
	By pre-composing a M\"{o}bius transformation, we may assume that $W$ is bounded in $\mathbb{C}$. By inequality \eqref{eq:1} and the properties of the lemma, there exists a constant $C_1$ depending only on $C$ and $W$ such that
	$$\tu{Area}_e\,U\leq C_1\,\tu{Area}_e\,E.$$
	
	Applying the Koebe distortion theorem to $h:(W',W)\to(h(W'),h(W))$, we obtain a constant $C_2\geq 1$, depending only on $m$, such that
	$$\tu{max}_{z\in W'}|h'(z)|\leq C_2\,\tu{min}_{z\in W'}|h'(z)|.$$
	
	Because a univalent map preserves the modulus of an annulus, we have $\tu{mod}\,h(W)\setminus\ol{(h(W'))}=m$. Therefore,
	there exists a constant $C_3> 0$, depending only on $m$, such that
	$$\tu{diam}_eh(W')\leq C_3\,\tu{dist}_e(\partial h(W),\partial h(W')).$$
	Let $z_h$ be in $\partial W'$, such that $h(z_h):=\tu{max}_{z\in \partial W'}\tu{dist}_e(h(z),0)$. Then, for any $z\in W'$, we have
	$$\frac{|h(z_h)|}{|h(z)|}\leq \frac{{|h(z)|+\tu{diam}_eh(W')}}{|h(z)|}\leq 1+\frac{\tu{diam}_eh(W')}{\tu{dist}_e(\partial h(W),\partial h(W')}\leq 1+C_3.$$
	Combining the inequalities above, we estimate
	\begin{align}
	\tu{Area}(\rho_*,h(E))&=\iint_{h(E)}\frac{{1}}{4\pi^2|w|^2}d\zeta d\eta=\iint_E\frac{|h'(z)|^2}{4\pi^2|h(z)|^2}dxdy\notag\\
	&\geq \frac{1}{C_2^2}\iint_E\frac{|h'(z_h)|^2}{4\pi^2|h(z_h)|^2}dxdy\geq\frac{1}{C_1C_2^2}\iint_U\frac{|h'(z_h)|^2}{4\pi^2|h(z_h)|^2}dxdy\notag\\
	&\geq \frac{1}{C_1C_2^4(1+C_3)^2}\iint_U\frac{|h'(z)|^2}{4\pi^2|h(z)|^2}dxdy\notag\\
	&=\lambda\,\tu{Area}\,(\rho_*,h(U)).\notag
	\end{align}
	Thus, the lemma is proved.
\end{proof}

The following is a distortion theorem for holomorphic branched covering. One can refer to \cite[Lemma~6.1]{QWY12} for the proof of parts~1 and 2  and to \cite[Lemma~4.5]{KL09} for the proof of part~3.

\begin{lem}\label{controlling_holomorphic_distortion}
	Let $E_i\subseteq U_i\subseteq V_i\subseteq \olC$, $i=1,2$, be a pair of Jordan disks with $\tu{mod}(V_2\sm \ol{U_2})\geq m>0$. Suppose that $g:V_1\to V_2$ is a proper holomorphic map of degree $\leq d$, $U_1$ is a component of $g^{-1}(U_2)$, and $E_1$ is a component of $g^{-1}(E_2)$. Then, there exists a constant $C=C(d,m)$, depending only on $d$ and $m$, such that
	
	\begin{enumerate}
		\item for all $z\in E_1$, the shape satisfies
		$$\tu{Shape}(E_1,z)\leq C\,\tu{Shape}\,(E_2,g(z));$$
		\item ${\rm diam} U_1/{\rm diam} E_1\leq C\,{\rm diam} U_2/{\rm diam} E_2;$
		\item $\tu{mod }(V_1\sm\ol{U}_1)\leq \tu{mod }(V_2\sm\ol{U}_2)\leq d\,\tu{mod }(V_1\sm\ol{U}_1).$
	\end{enumerate}
\end{lem}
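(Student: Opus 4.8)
The plan is to establish part~3 first, since the collar it produces is what powers parts~1 and~2. Write $A_i:=V_i\sm\ol{U_i}$, which is an annulus because $U_i$ and $V_i$ are Jordan disks, and recall $\tu{mod}\,A_i=1/D(h_i)$, where $h_i$ is the harmonic function on $A_i$ with boundary values $0$ on $\partial U_i$ and $1$ on $\partial V_i$ and $D(\cdot)$ is Dirichlet energy. The structural input is that $g(\ol{U_1})=\ol{U_2}$ (since $U_1$ is a full component of $g^{-1}(U_2)$ and $g$ is proper), so every $w\in A_2$ has all of its $\deg g\,(\le d)$ preimages in $A_1$. For $\tu{mod}\,A_2\le d\,\tu{mod}\,A_1$: the function $h_2\circ g$, extended by $0$ on the remaining components of $g^{-1}(U_2)$, is an admissible competitor on $A_1$ with $D(h_2\circ g)=\deg g\cdot D(h_2)\le d\,D(h_2)$, hence $D(h_1)\le d\,D(h_2)$. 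For $\tu{mod}\,A_1\le\tu{mod}\,A_2$, push $h_1$ forward: put $v(w):=\min\{h_1(z):z\in g^{-1}(w)\}$ on $A_2$, which is continuous, lies in $W^{1,2}(A_2)$, has the correct boundary values, and satisfies $D(v)\le D(h_1)$ by the change of variables carrying $A_2$ onto the measurable ``argmin sheet'' in $A_1$ on which $g$ is injective; comparison with the energy minimiser then gives $D(h_2)\le D(v)\le D(h_1)$.

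Part~3 yields $\tu{mod}(V_1\sm\ol{U_1})\ge m/d$, so both $\ol{U_i}\Subset V_i$ with a collar of modulus $\ge m/d$; by the standard bound on the diameter of a continuum surrounded by an annulus of definite modulus, $\tu{dist}(\ol{U_i},\partial V_i)\ge\lambda\cdot\tu{diam}\,\ol{U_i}$ for some $\lambda=\lambda(d,m)>0$. Pre-composing M\"obius maps (which preserve moduli and the spherical metric), I may assume the relevant sets lie in a bounded part of $\mathbb{C}$ on which the spherical and Euclidean metrics are comparable with constants depending only on $d$ and $m$, so it suffices to treat the Euclidean forms of parts~1 and~2.

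For part~1, fix $z\in E_1$, set $w:=g(z)$, $\rho:=\tu{dist}_e(w,\partial E_2)$, $L:=\max_{\zeta\in\partial E_2}\tu{dist}_e(w,\zeta)$ (so $\tu{Shape}(E_2,w)=L/\rho$), and let $P_t$ be the component of $g^{-1}(B_e(w,t))$ containing $z$; then $P_\rho\sbn E_1\sbn P_L$, whence $\tu{Shape}(E_1,z)\le\tu{diam}_e\,E_1/\tu{dist}_e(z,\partial E_1)\le\tu{diam}_e\,P_L/\tu{dist}_e(z,\partial P_\rho)$, so part~1 follows once one has (a)~$\tu{Shape}(P_t,z)\le C(d,m)$ and (b)~$\tu{diam}_e\,P_{kt}\le C(d,m)\,k\,\tu{diam}_e\,P_t$ for $0<t\le\tu{diam}_e\,U_2$, $k\ge1$; part~2 reduces to the same two estimates plus a lower bound on $\tu{diam}_e\,E_1$, obtained by lifting a path in $E_2$ joining two near-diametral points and applying the same local analysis. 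To prove (a) and (b) I would decompose $g$ near $\ol{U_1}$ along its critical values: on pieces free of critical points the Koebe distortion theorem applies, with the room furnished uniformly by the collars above; at a critical point of local degree $\ell\le d$, factor $g$ locally as $(\text{univalent})\circ(u\mapsto u^\ell)\circ(\text{univalent})$, the power map carrying concentric round disks to concentric round disks while only shrinking eccentricities (it raises them to the power $1/\ell$). Since at most $d-1$ critical values separate $z$ from $\partial V_1$, iterating these two moves gives (a) and (b) with constants depending only on $d$ and $m$.

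I expect the main obstacle to be exactly this bookkeeping: one must verify that the Koebe distortions accumulated along the chain of locally univalent pieces between successive critical values stay bounded purely in terms of $d$ and $m$ — independently of the scales and eccentricities of $E_1$ and $E_2$, which may be arbitrarily small and irregular — and, for part~2, that the path-lifting lower bound on $\tu{diam}_e\,E_1$ is uniform in the same sense. Converting the qualitative inclusion $\ol{U_1}\Subset V_1$ into genuinely quantitative Koebe space is precisely what the collar bound from part~3 buys. The details are carried out in \cite[Lemma~6.1]{QWY12} (parts~1 and~2) and \cite[Lemma~4.5]{KL09} (part~3), and the argument sketched above follows their approach.
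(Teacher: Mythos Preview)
Your proposal is correct and matches the paper's own treatment: the paper does not prove this lemma but simply refers to \cite[Lemma~6.1]{QWY12} for parts~1--2 and \cite[Lemma~4.5]{KL09} for part~3, exactly as you do at the end. The sketch you add beyond the citation is sound and follows the standard arguments in those references.
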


Following \cite{CT15}, the modulus difference distortion of an annulus is used to estimate the distortion between a univalent map and a M\"{o}bius transformation.
Let $V\subseteq \olC$ be an open set and $\phi:V\to \olC$ be a univalent map. Define
\[\mathfrak{D}(\phi,V):=\sup_{E_1,E_2\subseteq V}|{\rm mod}A(E_1,E_2)-{\rm mod}A(\phi(E_1),\phi(E_2))|,\]
where $E_1$ and $E_2$ are disjoint full continua in $V$ and $A(E_1,E_2):=\olC\setminus(E_1\cup E_2)$. The following result is an equivalent variation of \cite[Theorem 8.1]{CT15}.

\begin{lem}\label{lem:distorsion}
	Let $V\subseteq \olC$ be an open set containing three distinct points $z_1$, $z_2$, and $z_3$ and a disk $B(z_1,r_0)$ centered at $z_1$, and let $\phi:V\to \olC$ be a univalent map fixed at $z_1$, $z_2$, and $z_3$. Suppose that $\mathfrak{D}(\phi,V)=\delta<\infty$. Then, there exists a constant $C>0$, depending on only $z_1$, $z_2$, $z_3$, and $r_0$, such that $${\rm dist}(\phi(z),z)\leq C\cdot \delta.$$
\end{lem}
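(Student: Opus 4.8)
The plan is to reduce the assertion to a compactness statement about the space of normalized univalent maps with controlled modulus-difference distortion. First I would use the three fixed points $z_1,z_2,z_3$ to rule out degeneracy: by post-composing nothing (the map already fixes these points) and working in a chart where $z_1,z_2,z_3$ are, say, $0,1,\infty$, any univalent map $\phi:V\to\olC$ fixing all three is determined by its Schwarzian-type data, and the family of all such maps with $\mathfrak D(\phi,V)\le 1$ is a normal family. The key point is that the modulus-difference quantity $\mathfrak D(\phi,V)$ controls how far $\phi$ is from a M\"obius transformation: if $\mathfrak D(\phi,V)=0$ then $\phi$ preserves the modulus of every annulus $A(E_1,E_2)$ with $E_i$ full continua in $V$, and a standard rigidity argument (a univalent map preserving all such moduli, equivalently preserving extremal length of all curve families avoiding prescribed continua) forces $\phi$ to be the restriction of a M\"obius map; since it fixes $z_1,z_2,z_3$, that M\"obius map is the identity.

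Next I would set up the quantitative version by contradiction and compactness. Suppose the conclusion fails: then there is a sequence of univalent maps $\phi_n:V\to\olC$, each fixing $z_1,z_2,z_3$, with $\delta_n:=\mathfrak D(\phi_n,V)\to 0$ but $\sup_{z}\mathrm{dist}(\phi_n(z),z)\ge C_n\,\delta_n$ for every constant $C_n\to\infty$; more precisely, pick $z^{(n)}\in V$ (one may restrict attention to $z^{(n)}$ in the fixed disk $B(z_1,r_0)$, since that is where we must get the estimate, and $B(z_1,r_0)$ is the compact-exhaustible piece we control) with $\mathrm{dist}(\phi_n(z^{(n)}),z^{(n)})/\delta_n\to\infty$. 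Using that the $\phi_n$ omit at least two values (they are univalent, hence omit a lot) and fix three points, Montel gives a locally uniformly convergent subsequence $\phi_n\to\phi_\infty$ on compact subsets of the interior; the limit $\phi_\infty$ is either univalent or constant, but the three fixed points force it to be univalent, and $\mathfrak D$ is lower semicontinuous under locally uniform convergence of univalent maps (moduli of annuli with continua in a slightly shrunk $V$ pass to the limit), so $\mathfrak D(\phi_\infty,V)\le\liminf\delta_n=0$. By the rigidity step, $\phi_\infty=\mathrm{id}$.

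The heart of the argument is then to upgrade this qualitative rigidity to the linear estimate $\mathrm{dist}(\phi(z),z)\le C\delta$. For this I would linearize: write $\psi_n:=(\phi_n-\mathrm{id})/\delta_n$ (in the affine chart), so $\psi_n$ fixes $z_1,z_2,z_3$ in the linearized sense ($\psi_n(z_i)=0$), and show the $\psi_n$ form a normal family on $B(z_1,r_0)$ with a uniform bound — this is where one must extract, from the definition of $\mathfrak D$ as a supremum over annuli, a bound of the form $\|\psi_n\|_{B(z_1,r_0)}\le K$ with $K$ depending only on $z_1,z_2,z_3,r_0$. Concretely, one applies the annulus-modulus comparison to well-chosen test continua: for a point $z$ near $z_1$, take $E_1$ a tiny continuum through $z$ and $z_1$ and $E_2$ a fixed continuum through $z_2$ and $z_3$; the modulus of $A(E_1,E_2)$ and of its $\phi_n$-image differ by at most $\delta_n$, and a derivative computation of $\mathrm{mod}\,A$ with respect to moving one endpoint of $E_1$ converts this into the bound $\mathrm{dist}(\phi_n(z),z)=O(\delta_n)$, uniformly on $B(z_1,r_0/2)$ say, and then one covers $B(z_1,r_0)$ by finitely many such balls. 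The main obstacle I expect is precisely this last computation: making the dependence of $\mathrm{mod}\,A(E_1,E_2)$ on the continua quantitative and showing the implied constant depends only on the allowed data $(z_1,z_2,z_3,r_0)$ and not on the (unknown, possibly wild) shape of $V$ — this is where Cui--Tan's original argument (\cite[Theorem 8.1]{CT15}) does the real work, and I would either invoke their estimate directly or reprove it via extremal length, using that the three fixed points plus the round disk $B(z_1,r_0)$ give enough ``room'' to run the comparison regardless of the rest of $V$.
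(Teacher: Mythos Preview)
The paper does not prove this lemma: it is stated as ``an equivalent variation of \cite[Theorem~8.1]{CT15}'' and no argument is supplied. Your proposal ultimately lands in the same place, explicitly deferring the quantitative step to Cui--Tan's theorem.

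The preliminary material you sketch does not shorten the path. The compactness/rigidity argument yields only $\mathrm{dist}(\phi(z),z)\to 0$ as $\delta\to 0$, not the linear rate; and normality of the rescaled maps $\psi_n=(\phi_n-\mathrm{id})/\delta_n$ is \emph{equivalent} to the desired bound, so establishing it is the whole problem rather than a step toward it. Your test-continua idea (compare moduli for $E_1$ joining $z_1$ to $z$ and $E_2$ joining $z_2$ to $z_3$) is in the right spirit and close to what \cite{CT15} actually does, but---as you yourself note---converting an $O(\delta)$ modulus discrepancy into an $O(\delta)$ distance bound, with a constant depending only on $z_1,z_2,z_3,r_0$ and not on the shape of $V$ or of the image continua $\phi(E_i)$, is precisely the content of Cui--Tan's result. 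So your proposal correctly locates the difficulty and defers it to the same source the paper does; neither offers an independent proof.
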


\subsection{Nested disk systems}\label{sec:system}
Our definition of $m$-nested, $\lambda$-scattered disk systems is a minor generalization of the concept of $s(r)$-nested, $\lambda$-scattered disk systems given in \cite[Section 8.2]{CT15}.

\begin{defi}
	Let $X$ be a finite set in $\olC$. A collection of open topological disks $\{D_x\}_{x\in X}$ is called a \emph{nested disk system} if
	\begin{itemize}
		\item each $D_x$ contains $x$;
		\item if $D_x\cap D_y\neq\es$ with $x\not=y$, then either $D_x\subsetneqq D_y$ or $D_x\supsetneqq D_y$.
	\end{itemize}
	
	Let $\{D_x\}_{x\in X}$ be a nested disk system. Let $\{D_x''\}_{x\in X}$ and $\{D_x''\}_{x\in X}$ be two sequences of topological disks. Then, $\{(D_x'',D_x',D_x)\}_{x\in X}$ is said to be $m$-$nested$ if
	\begin{itemize}
		\item  $x\in D''_x\subseteq D'_x\subseteq D_x$;
		\item  if $D_y\subseteq D_x$ and $D_y\cap D_x''\not=\emptyset$, then $D_y\subseteq D_x'$;
		\item $\tu{min}_{x\in X}\{ \tu{mod }D_x\sm\ol{D_x'}\}\geq m.$
	\end{itemize}
\end{defi}

Comparing with the original concept of $s(r)$-nested disk systems given in \cite{CT15}, we remark that if $\{D_x\}_{x\in X}$ is an $s(r)$-nested disk system, then $D_y$ must avoid $x$ if $D_y\subseteq D_x$. However, our definition of $m$-nested disk systems does not have this restriction.

Let $W\subseteq \olC$ be an open set and $\{D_x\}_{x\in X}$ be a nested disk system, such that $\ol{\cup_{x\in X} D_x}\subseteq W$. Let $V_x$ be the union of all $D_y(\neq D_x)$ contained in $D_x$. Then, the nested disk system $\{D_x\}_{x\in X}$ is said to be $\lambda$-$scattered$ in $W$ with $\lambda\in(0,1)$ for each $x\in X$ and any univalent map $h:W\to\mathbb{C}^{*}$,
$$\tu{Area}(\rho_*,h(V_x))\leq\lambda\,\tu{Area}(\rho_*,h(D_x)),$$
where $\tu{Area}(\rho_*,S):=\iint_S\frac{1}{4\pi^2|w|^2}d\zeta d\eta$ with $\rho_*(w):=\frac{1}{2\pi|w|}$ as a planar metric on $\mathbb{C}^*$.

The following result is a generalization of Theorem 8.4 in \cite{CT15}. The original theorem deals with $s(r)$-nested, $\lambda$-scattered disk systems whereas ours pertains to $m$-nested, $\lambda$-scattered disk systems, but the arguments are basically the same. One can refer to \cite[Theorem 8.4]{CT15} or \cite[Theorem 4.4]{Zeng15} for its proof.

\begin{prop}\label{pro:control}
	Let $W\subseteq\olC$ be an open set with $\olC\setminus \ol{W}\not=\emptyset$. Then, for any
	$m$-nested, $\lambda$-scattered disk system $\{(D_x'',D_x',D_x)\}_{x\in X}$ in $W$ with $m$ sufficiently large and for
	any univalent mapping $\phi:\olC\setminus\ol{\cup_{x\in X}D_x''}\to \olC$,
	there exists $C(m,\lambda)>0$, depending on only $m$, $\lambda$, and $W$ and with $C(m,\lambda)\to 0$ as $m\to\infty$, such that
	$$ \mathfrak{D}(\phi,\olC\setminus \ol{W})\leq C(m,\lambda).$$
\end{prop}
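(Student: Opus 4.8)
The proof follows that of \cite[Theorem~8.4]{CT15} (see also \cite[Theorem~4.4]{Zeng15}); I only outline the main steps and indicate the places where the weakening from the $s(r)$-nested condition to the $m$-nested condition requires extra care.

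Fix two disjoint full continua $E_1,E_2\sbn\olC\sm\ol W$ and set $A:=A(E_1,E_2)=\olC\sm(E_1\cup E_2)$ and $A':=A(\phi(E_1),\phi(E_2))$; one must bound $|{\rm mod}\,A-{\rm mod}\,A'|$ by a quantity $C(m,\lambda)$ independent of $E_1,E_2$. Since $\ol{\cup_xD_x}\sbn W$ and $W\cap(E_1\cup E_2)=\es$, the map $\phi$ is defined and univalent on all of $A\sm\ol{\bigcup_xD_x''}$, and it carries this set conformally onto $A'\sm\bigcup_{j=1}^n\ol{K_j}$, where $K_1,\dots,K_n$ are the complementary continua of $\phi\big(\olC\sm\ol{\bigcup_xD_x''}\big)$, in bijection with the connected components of $\ol{\bigcup_xD_x''}$. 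Thus $A$ and $A'$ are conformally identical outside finitely many topological-disk ``holes'', and the whole problem is to show that filling these holes in changes the modulus by at most $C(m,\lambda)$, with $C(m,\lambda)\to0$ as $m\to\infty$.

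The key tool is the area--modulus dictionary: for any annulus $\mc A$ with conformal uniformization $u_{\mc A}\colon\mc A\to\{1<|z|<R\}\sbn\mb C^*$ one has ${\rm mod}\,\mc A={\rm Area}(\rho_*,u_{\mc A}(\mc A))$, as a direct computation shows that ${\rm Area}\big(\rho_*,\{1<|z|<R\}\big)=(\log R)/2\pi$; more generally ${\rm Area}(\rho_*,u_{\mc A}(S))$ is a conformal invariant of a pair $(\mc A,S)$ with $S\sbn\mc A$ open. Applying this to $A$ and $A'$ and splitting off the holes gives
\[{\rm mod}\,A={\rm Area}\big(\rho_*,u_A(A\sm\ol{\textstyle\bigcup_xD_x''})\big)+{\rm Area}\big(\rho_*,u_A(\textstyle\bigcup_xD_x'')\big)\]
and likewise for $A'$ with the $K_j$'s. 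Comparing the two expressions through the conformal identification $\phi$ (and controlling the discrepancy between the two flat metrics on the identified part by the distortion estimate across the modulus-$\geq m$ gaps provided by the $m$-nested hypothesis, cf.\ Lemma~\ref{controlling_holomorphic_distortion}) reduces the estimate on $|{\rm mod}\,A-{\rm mod}\,A'|$ to bounding the two ``trapped-mass'' quantities ${\rm Area}(\rho_*,u_A(\bigcup_xD_x''))$ and ${\rm Area}(\rho_*,u_{A'}(\bigcup_jK_j))$, up to an error that is $o(1)$ as $m\to\infty$.

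To bound the trapped mass one uses that $W\sbn A$ and $\phi(W)\sbn A'$, so $h:=u_A|_W$ and $h':=(u_{A'}\circ\phi)|_W$ are univalent maps $W\to\mb C^*$, and therefore the $\lambda$-scattered hypothesis applies to each: ${\rm Area}(\rho_*,h(V_x))\leq\lambda\,{\rm Area}(\rho_*,h(D_x))$ for every $x$, and similarly for $h'$. Iterating this down the finite nesting forest of $\{D_x\}_{x\in X}$ converts the total mass sitting strictly inside the disks into a geometric series of ratio $\lambda<1$, yielding a bound uniform in $\#X$ and in the particular system; and the modulus-$\geq m$ gaps $D_x\sm\ol{D_x'}$ — via conformal invariance of the modulus together with a Gr\"otzsch-type estimate (and Lemma~\ref{lema_area}, resp.\ Lemma~\ref{controlling_holomorphic_distortion}) — force ${\rm Area}(\rho_*,h(D_x''))$ to be a fraction of ${\rm Area}(\rho_*,h(D_x))$ that tends to $0$ as $m\to\infty$, and the same for $h'$. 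Combining all the estimates and taking the supremum over all admissible $E_1,E_2$ produces the required $C(m,\lambda)$ with $C(m,\lambda)\to0$. The main difficulty in carrying this out is precisely this last quantitative step: one must keep every constant uniform over the disk system — in particular independent of $\#X$ and of how deeply the disks are nested — while genuinely exhibiting decay as $m\to\infty$, and one must run the ``fill-in'' comparison symmetrically on the $A$- and $A'$-sides, since the canonical flat metrics of the two annuli disagree exactly on the region where $\phi$ identifies them.
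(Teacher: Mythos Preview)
The paper does not actually prove this proposition: it states that the argument is essentially the same as that of \cite[Theorem~8.4]{CT15} (or \cite[Theorem~4.4]{Zeng15}) and refers the reader there. Your proposal is precisely an outline of that Cui--Tan argument, so it is fully aligned with the paper's approach; there is nothing further to compare.
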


Combining Lemma \ref{lem:distorsion} and Proposition \ref{pro:control}, we get a distortion theorem of univalent maps off nested disk systems.

\begin{thm}\label{thm_nest}
	Let $W\subseteq \olC$ be an open set and $V$ be a Jordan domain with $V\subseteq \olC\setminus \ol{W}$. Let $z_1$, $z_2$, and $z_3$ be three distinct points in $V$. Then, for any
	$m$-nested, $\lambda$-scattered disk system $\{(D_x'',D_x',D_x)\}_{x\in X}$ in $W$ with $m$ sufficiently large and
	any univalent mapping $\phi:\olC\setminus\ol{\cup_{x\in X}D_x''}\to \olC$ fixing $z_1$, $z_2$, and $z_3$,
	there exists $C(m,\lambda)>0$, depending only on $m$, $\lambda$, $W$, and $V$ and with $C(m,\lambda)\to 0$ as $m\to\infty$, such that
	\[\sup_{z\in V}\{{\rm dist}(\phi(z),z)\}\leq C(m,\lambda).\]
\end{thm}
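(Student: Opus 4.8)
The plan is to combine the two preceding results in the obvious way, so the proof is short. First I would invoke Proposition~\ref{pro:control} to the open set $W$ and the given $m$-nested, $\lambda$-scattered disk system $\{(D_x'',D_x',D_x)\}_{x\in X}$: this yields, for $m$ sufficiently large, a constant $C_1(m,\lambda)>0$ depending only on $m$, $\lambda$, and $W$, with $C_1(m,\lambda)\to 0$ as $m\to\infty$, such that any univalent map $\phi:\olC\setminus\ol{\cup_{x\in X}D_x''}\to\olC$ satisfies $\mathfrak{D}(\phi,\olC\setminus\ol{W})\leq C_1(m,\lambda)$. Note this bound on the modulus-difference distortion $\mathfrak{D}$ is uniform over all such $\phi$, in particular over those fixing the three prescribed points.

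Next I would apply Lemma~\ref{lem:distorsion} with the open set taken to be $V':=\olC\setminus\ol{W}$, which contains the Jordan domain $V$, hence contains the three distinct points $z_1,z_2,z_3$ and also a disk $B(z_1,r_0)$ for some $r_0>0$ (since $V$ is open and $z_1\in V$; one fixes such an $r_0$ depending on $V$). Restricting $\phi$ to $V'=\olC\setminus\ol{W}$ gives a univalent self-map of a domain fixing $z_1,z_2,z_3$, with $\mathfrak{D}(\phi,V')=\mathfrak{D}(\phi,\olC\setminus\ol{W})\leq C_1(m,\lambda)=:\delta<\infty$. Lemma~\ref{lem:distorsion} then produces a constant $C_2>0$ depending only on $z_1,z_2,z_3,r_0$ — hence only on $V$ — such that $\mathrm{dist}(\phi(z),z)\leq C_2\cdot\delta$ for all $z$ in the domain, in particular for all $z\in V$. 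Setting $C(m,\lambda):=C_2\cdot C_1(m,\lambda)$, which depends only on $m$, $\lambda$, $W$, and $V$ and tends to $0$ as $m\to\infty$ (because $C_1(m,\lambda)$ does and $C_2$ is a fixed constant), we obtain $\sup_{z\in V}\mathrm{dist}(\phi(z),z)\leq C(m,\lambda)$, as desired.

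There is essentially no obstacle here, since both ingredients are assumed; the only points requiring a word of care are bookkeeping ones. One must check that the hypothesis $\olC\setminus\ol{W}\neq\emptyset$ of Proposition~\ref{pro:control} is supplied by the hypothesis $V\subseteq\olC\setminus\ol{W}$ of the theorem (so $V'$ is a nonempty open set), and that the radius $r_0$ needed by Lemma~\ref{lem:distorsion} can be chosen as a function of $V$ alone — which it can, e.g.\ $r_0:=\tfrac12\,\mathrm{dist}(z_1,\partial V)$ — so that the final constant genuinely depends only on the listed data. No new estimates are needed.
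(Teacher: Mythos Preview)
Your proposal is correct and follows exactly the approach indicated by the paper, which offers no explicit proof but simply states that the theorem is obtained by ``Combining Lemma~\ref{lem:distorsion} and Proposition~\ref{pro:control}.'' You have supplied precisely the routine details the paper omits, including the verification that $\olC\setminus\ol{W}\neq\emptyset$ and the choice of $r_0$ depending only on $V$.
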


\subsection{Rational maps}
A rational map $f$ can be regarded as an analytic finite-branched covering of the Riemann sphere $\olC$. The set of \emph{critical points} corresponds to the points $c\in \olC$ such that the local degree $\tu{deg}(f,c)\geq 2$, that is, $f'(c)=0$ (in a suitable chart). A point $z\in\olC$ of period $p$ is called \emph{parabolic} (resp. \emph{repelling}) if the multiplier $(f^p)'(z)$ is a root of unity (resp. \,has modulus greater than $1$). Parabolic and repelling periodic points are contained in the Julia set. The orbit of a point $z$, denoted by ${\rm Orb}(z)$, represents the set $\{z,f(z),f^2(z),\ldots,\}$. The \emph{$\omega$-limit set} $\omega(z)$ of a point $z$ comprises the accumulation points of ${\rm Orb}(z)$. A point $z$ is said to be \emph{recurrent} if $z\in \omega(z)$.
The following result due to Ma\~{n}\'{e} will be used repeatedly; see \cite[Theorem II]{Ma93} or \cite{ST00}.

\begin{lem}[Ma\~{n}\'{e}'s lemma]\label{mane_lemma}
	Let $f:\olC\to\olC$ be a rational map with degree at least two. If a point $x\in \mc{J}_f$ is not a parabolic periodic point and is not contained in the $\omega$-limit set of a recurrent critical point, then for any $\epsilon>0$ there exist $\delta=\delta(x,\epsilon)<\epsilon$ and integer $\eta=\eta(x,\epsilon)$ such that for any $n\geq 0$ and any component $B_n$ of $f^{-n}B(x,\delta)$,
	\begin{enumerate}
		\item the diameter of $B_n$ is less than $\epsilon$;
		\item the degree of $f^n:B_n\to B(x,\delta)$ is less than $\eta$;
		\item $B_n$ is a topological disk;
		\item $\tu{diam}\,B_n\to 0$ as $n\to\infty$.
	\end{enumerate}
\end{lem}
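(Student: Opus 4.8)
Since the paper quotes this lemma rather than reproving it, the natural route is to recall Ma\~{n}\'{e}'s argument \cite{Ma93} (streamlined in \cite{ST00}). The heart is assertion~1 --- a diameter bound for \emph{every} component of \emph{every} backward iterate of a small disk about $x$; assertions~2--4 will be extracted from it with Riemann--Hurwitz, the Koebe distortion theorem, and normality arguments. The essential dynamical input is the companion expansion theorem of \cite{Ma93}: a compact forward-invariant set $\Lambda\subseteq\mc{J}_f$ free of critical and of parabolic periodic points is either \emph{hyperbolic} (uniformly expanding for some conformal metric defined near $\Lambda$) or meets $\omega(c)$ for a recurrent critical point $c$. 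The whole proof is arranged so that any failure of one of the assertions exhibits a recurrent critical point $c$ with $x\in\omega(c)$, or shows $x$ to be parabolic periodic --- both forbidden by hypothesis.

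\emph{Assertion~1.} Suppose it fails for every $\delta$: there are $\epsilon_0\in(0,\epsilon)$, integers $n_k$, and components $B^{(k)}$ of $f^{-n_k}(B(x,1/k))$ with $\tu{diam}\,B^{(k)}\geq\epsilon_0$. The $n_k$ cannot stay bounded, since if all $n_k\leq N$ then, for each fixed $m\leq N$, the components of $f^{-m}(B(x,1/k))$ shrink as $k\to\infty$ to the points of the finite set $f^{-m}(x)$, contradicting $\tu{diam}\,B^{(k)}\geq\epsilon_0$. So, along a subsequence, $n_k\to\infty$; write the defining chain $B^{(k)}=B^{(k)}_{n_k}\to\cdots\to B^{(k)}_0=B(x,1/k)$, each arrow being $f$. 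Fix a closed forward-invariant set $E\supseteq\{\text{parabolic periodic points}\}\cup\bigcup_{c\ \text{recurrent}}\omega(c)$ and a small neighbourhood $N$ of the finitely many critical points, so small --- this is where the expansion theorem is used, legitimately since $x\notin E$ --- that every branch of $f^{-1}$ strictly contracts the hyperbolic metric of $\olC\sm E$ off $E\cup N$ by a definite factor. Running each chain backwards and balancing this contraction against the boundedly many passages through $N$, one finds that the chains must shadow $E$ or meet $N$ along a definite fraction of their length; taking a Hausdorff limit of the $\ol{B^{(k)}}$ and limits of suitable renormalisations then exhibits either $x\in\omega(c)$ for a recurrent critical point $c$, or $x$ as a parabolic periodic point --- a contradiction. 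This yields $\delta=\delta(x,\epsilon)$; shrinking it we may also arrange $\delta<\rho_0(f)$ and $2\delta<\tu{diam}\,\mc{J}_f$, where $\rho_0(f)>0$ is chosen so that every component of $f^{-1}(\Omega)$ is a topological disk containing at most one critical point whenever $\Omega$ is a topological disk with $\tu{diam}\,\Omega<\rho_0(f)$.

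\emph{Assertions~2, 3, 4.} We may assume $\epsilon<\rho_0(f)$ (proving the lemma for smaller $\epsilon$ is stronger), so by assertion~1 every link of every chain has diameter $<\rho_0(f)$. Since $B(x,\delta)$ is a Jordan disk of diameter $<\rho_0(f)$, induction along the chains together with the defining property of $\rho_0(f)$ shows every $B_n$ is a topological disk whose links each contain at most one critical point --- here one uses Riemann--Hurwitz: a proper holomorphic map of a planar domain onto a disk with at most one critical point has simply connected domain. This is assertion~3. For assertion~2, $B_n$ being a disk gives $\deg\bigl(f^n\colon B_n\to B(x,\delta)\bigr)=\prod_{j=1}^{n}\deg\bigl(f|_{B_j}\colon B_j\to B_{j-1}\bigr)$, whose $j$-th factor exceeds $1$ exactly when $B_j$ meets $\tu{Crit}(f)$; so if degrees were unbounded over $\delta=\delta_m\downarrow0$ (assertion~1 holds for all small $\delta$), unboundedly many links of a single chain would meet $\tu{Crit}(f)$, and since $\#\,\tu{Crit}(f)=2d-2$ with multiplicity some critical point $c$ would lie in links $B_j,B_{j'}$ ($j<j'$) with $j$ large, so $f^{j}(c)\in B(x,\delta_m)$ --- whence $x\in\omega(c)$ as $\delta_m\to0$ --- while $c$ and $f^{j'-j}(c)$ both lie in $B_j$, whose diameter $\to0$; if the return times $j'-j$ stay bounded $c$ is a periodic critical point, impossible since its cycle is superattracting, hence in $\mc{F}_f$, while $x\in\mc{J}_f$; if they are unbounded, $c$ is recurrent with $x\in\omega(c)$ --- contradiction, so $\deg(f^n|_{B_n})\leq\eta=\eta(x,\epsilon)$. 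For assertion~4, if $\tu{diam}\,B_{n_k}\geq\epsilon'>0$ along $n_k\to\infty$, then, $f^{n_k}\colon B_{n_k}\to B(x,\delta)$ having degree $\leq\eta$, Lemma~\ref{controlling_holomorphic_distortion}(1) bounds $\tu{Shape}(B_{n_k},\xi_k)$ at a preimage $\xi_k\in B_{n_k}$ of $x$ (so $\xi_k\in\mc{J}_f$), whence $B_{n_k}$ contains a round disk about $\xi_k$ of radius $\geq r:=\epsilon'/2C(\eta)$; passing to a subsequence with $\xi_k\to\xi\in\mc{J}_f$, one has $B(\xi,r/2)\subseteq B_{n_k}$ and $f^{n_k}(B(\xi,r/2))\subseteq B(x,\delta)$ for large $k$, so by Montel some $f^{n_{k_j}}$ converges locally uniformly on $B(\xi,r/2)$; then $\tu{diam}\,f^{n_{k_j}}(\ol{B(\xi,r/4)})$ tends to the diameter of a subset of $\ol{B(x,\delta)}$, which is $\leq2\delta<\tu{diam}\,\mc{J}_f$, whereas the blow-up property of the Julia set forces $f^{n}(B(\xi,r/4))\supseteq\mc{J}_f$ for all large $n$ --- a contradiction.

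\emph{Main obstacle.} The genuinely hard step is assertion~1, and within it the expansion theorem of \cite{Ma93} --- uniform hyperbolic expansion off the critical and parabolic points --- together with the quantitative estimate of how long a backward chain can linger near $\tu{Crit}(f)$ and near $E$. Granting that, assertions~2--4 are routine bookkeeping with Riemann--Hurwitz, Koebe, Montel, and the blow-up property.
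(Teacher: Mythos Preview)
The paper does not prove this lemma; it merely states it and cites \cite{Ma93} and \cite{ST00}. You correctly identify this at the outset, so there is no ``paper's proof'' to compare against --- your proposal is a sketch of the cited argument, which is the right thing to do here.

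Your outline is broadly faithful to the Ma\~{n}\'{e}--Shishikura--Tan approach, and your identification of assertion~1 (together with the underlying expansion theorem) as the genuine content is accurate. A few places are rougher than they need to be. In assertion~2, your contradiction argument quantifies over $\delta=\delta_m\downarrow0$ rather than working at a fixed $\delta$; this is fine in spirit, but the extraction of recurrence from ``$c$ lies in two links $B_j,B_{j'}$'' needs a little more: you should arrange that a \emph{single} critical point $c$ appears in arbitrarily many links of chains over disks of shrinking radius, so that both $f^{j_m}(c)\to x$ with $j_m\to\infty$ (giving $x\in\omega(c)$) and $\tu{dist}(c,f^{j'_m-j_m}(c))\to0$ with $j'_m-j_m\to\infty$ (giving $c\in\omega(c)$) hold simultaneously. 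In assertion~4, your appeal to Lemma~\ref{controlling_holomorphic_distortion}(1) needs a nested pair $U_2\subsetneq V_2$ with a modulus lower bound; the standard fix is to run assertions~1--3 for $B(x,\delta)$ but state the conclusion for $B(x,\delta/2)$, so that the annulus $B(x,\delta)\setminus\ol{B(x,\delta/2)}$ supplies the required modulus. These are routine repairs, consistent with your own remark that assertions~2--4 are bookkeeping once assertion~1 is in hand.
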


\begin{defi}
	Let $f$ be a rational map. A topological disk $B\subseteq \olC$ is said to be \emph{$(\epsilon,\eta)$-backward stable}  if, for any $n\geq0$ and any component $B_n$ of $f^{-n}(B)$, properties~1--4 in Lemma~\ref{mane_lemma} hold when replacing $B(x,\delta)$ with $B$ in property~2.
\end{defi}

The convergence theorem below for a rational map sequence comes from \cite[Lemma~2.8]{CT15}.
\begin{lem}\label{convergence_of_rational_maps}
	Let $\{f_n\}$ be a sequence of rational maps with constant degree $d\geq 1$. Suppose that on a non-empty open subset $U\subset \olC$ the sequence $\{f_n\}$ converges uniformly to a map $g$ as $n\to\infty$.
	Then, $g$ is a rational map with $\tu{deg }g\leq d$. Moreover, if the equality   holds then $\{f_n\}$ converges uniformly to $g$ on the whole sphere $\olC$ as $n\to\infty$.
\end{lem}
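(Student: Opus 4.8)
The plan is to identify a rational map of degree $d$ with the point of $\mathbb{P}^{2d+1}$ recording the coefficients of a homogeneous representative, to extract a subsequential limit by compactness of $\mathbb{P}^{2d+1}$, and then to analyse how a possible drop in degree forces only finitely many ``holes'', away from which the convergence is locally uniform. First I would write $f_n=[P_n^\ast:Q_n^\ast]$, where $P_n^\ast,Q_n^\ast$ are homogeneous of degree $d$ in two variables with no common zero in $\mathbb{P}^1\cong\olC$; the resulting projectivised coefficient vector $v_n$ lies in the open locus $\mathrm{Rat}_d\subseteq\mathbb{P}^{2d+1}$ where the resultant does not vanish. By compactness, some subsequence satisfies $v_{n_j}\to\hat v=[\hat P^\ast:\hat Q^\ast]$ with $(\hat P^\ast,\hat Q^\ast)\neq(0,0)$. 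Writing $\hat H^\ast=\gcd(\hat P^\ast,\hat Q^\ast)$ of degree $k$ and $\hat P^\ast=\hat H^\ast\hat P_1^\ast$, $\hat Q^\ast=\hat H^\ast\hat Q_1^\ast$, I set $\psi:=[\hat P_1^\ast:\hat Q_1^\ast]$, a rational map of degree $d-k\le d$, and let $X\subseteq\olC$ be the zero set of $\hat H^\ast$, so that $\#X\le k$.

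The key step is an elementary estimate: because $\hat P_1^\ast$ and $\hat Q_1^\ast$ have no common zero, $\inf_K\big(|\hat P^\ast|^2+|\hat Q^\ast|^2\big)>0$ on each compact $K\subseteq\olC\setminus X$, and since the coefficients of $P_{n_j}^\ast$ and $Q_{n_j}^\ast$ converge to those of $\hat P^\ast$ and $\hat Q^\ast$, we get for $j$ large a uniform positive lower bound for $|P_{n_j}^\ast|^2+|Q_{n_j}^\ast|^2$ on $K$; a Fubini--Study type bound for the spherical distance between $[a:b]$ and $[a':b']$ in terms of $|ab'-a'b|$ then gives $f_{n_j}\to\psi$ uniformly on $K$. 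Letting $K$ run through a compact exhaustion of the non-empty open set $U\setminus X$ and comparing with the hypothesis $f_n\to g$ on $U$, we obtain $g=\psi$ on $U\setminus X$; as $g$ is continuous on $U$, $\psi$ is continuous on $\olC$, and $U\setminus X$ is dense in $U$, it follows that $g=\psi$ throughout $U$. Hence $g$ is the restriction of the rational map $\psi$ and $\deg g=\deg\psi\le d$, which is the first assertion.

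For the ``moreover'' part, suppose $\deg g=d$. Since two rational maps agreeing on a non-empty open set coincide, the rational extension $\psi$ of $g$ is determined by $g$ alone, so the above analysis applied to an \emph{arbitrary} subsequential limit $\hat v$ of $\{v_n\}$ in $\mathbb{P}^{2d+1}$ forces $\deg\psi=\deg g=d$, hence $k=0$, $X=\emptyset$, and $\hat v=v_g$, the coefficient point of $g$ itself. Thus every subsequential limit of $\{v_n\}$ equals $v_g$, so $v_n\to v_g$ in the compact space $\mathbb{P}^{2d+1}$; applying the estimate of the preceding paragraph to this convergence with $K=\olC$ (legitimate now that $X=\emptyset$) yields $f_n\to g$ uniformly on $\olC$.

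The part I expect to cost some care is bookkeeping rather than anything conceptual: carrying out the ``key step'' so that it is uniform near $\infty$ as well (the homogeneous coordinates are chosen precisely so that $\infty$ is treated like any other point), disposing of the degenerate cases $\hat P^\ast\equiv 0$ or $\hat Q^\ast\equiv 0$ in which $\psi$ is a constant and the hole set accounts for the spurious zeros, and checking that ``$\deg g$'' is unambiguous before it is used --- which is exactly what the density-and-continuity step secures.
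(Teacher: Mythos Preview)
The paper does not supply its own proof of this lemma; it simply cites \cite[Lemma~2.8]{CT15}. Your argument is correct and is in fact the standard one: embedding $\mathrm{Rat}_d$ as the non-vanishing-resultant locus in $\mathbb{P}^{2d+1}$, extracting a subsequential limit by compactness, stripping the common factor to obtain a rational map $\psi$ of degree $d-k$ with a finite hole set $X$, and then using continuity of the evaluation map away from $X$ to identify $\psi$ with $g$ on $U$. The ``moreover'' step is handled exactly as it should be---uniqueness of the rational extension of $g$ pins down every subsequential limit in $\mathbb{P}^{2d+1}$, forcing $v_n\to v_g\in\mathrm{Rat}_d$, and continuity of evaluation on $\mathrm{Rat}_d$ with $K=\olC$ finishes. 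The bookkeeping you flag (treating $\infty$ via homogeneous coordinates, the constant-$\psi$ cases, and the density-and-continuity patch on $U\cap X$) is indeed routine.
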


\subsection{Critical portraits of polynomials}\label{sec:critical-portrait}


Let $f$ be a monic centered polynomial of degree $d$. Then, there exists a conformal map $\psi_f$ near $\infty$, normalized by $\psi_f(z)/z\to 1$ as $z\to\infty$ and called the \emph{B\"{o}ttcher coordinate} of $f$, conjugating $f$ with $z\mapsto z^d$ near $\infty$. Its inverse $\psi_f^{-1}$ has a continuation to $\mathbb{C}\setminus \ol{\mathbb{D}(R)}$ provided that $\psi^{-1}_f(\mathbb{C}\setminus \ol{\mathbb{D}(R)})$ contains no critical points of $f$ except $\infty$.

The \emph{Green function} $G_f:\mathbb{C}\to [0,+\infty)$ defined by
\[G_f(z):=\lim_{n\to\infty}\log^+|f^n(z)|/d^n,\]
where $\log^+t:=\max\{0,\log t\}$, measures the rate at which points escape to $\infty$. It is a well-defined continuous function that vanishes on the \emph{filled-in Julia set} $\mathcal{K}_f$, i.e., the points $z\in\mathbb{C}$ such that $f^n(z)\not\to\infty$ as $n\to\infty$, and coincides with $\log|\psi_f(z)|$, where $\psi_f$ is well-defined. We say that a point $z$ has \emph{escaping rate} $r$ if $G_f(z)=r$.

If $\mathcal{K}_f$ is connected, the B\"{o}ttcher coordinate $\psi_f$ maps $\mathbb{C}\setminus \mathcal{K}_f$ onto $\mathbb{C}\setminus\ol{\mathbb{D}}$.
The preimage of $(1,\infty)e^{2\pi i\theta}$ under $\psi_f^{-1}$, denoted by $R_f(\theta)$, is called the \emph{external ray} of $f$ at angle $\theta$. We say that $R_f(\theta)$ \emph{lands} if $\ol{R_f(\theta)}\cap \mathcal{K}_f$ is a singleton .

If $\mathcal{K}_f$ is not connected, according to Levin and Sodin \cite{LS}, then for each $\theta\in\mathbb{T}$ there is an infimum $r_f(\theta)\geq 1$ such that $\psi_f^{-1}$ extends analytically along $(r_f(\theta),\infty)e^{2\pi i\theta}$. We denote by $R_f(\theta)$
the preimage of $(r_f(\theta),\infty)e^{2\pi i\theta}$ under $\psi^{-1}_f$, called the \emph{external ray} of $f$ at $\theta$. When $r_f(\theta)>1$,
as $s\searrow \log r_f(\theta)$, the point $\psi_f^{-1}(e^{s+2\pi i\theta})\in R_f(\theta)$ converges to a point $x$, which is an iterated preimage by $f$ of the critical points of $f$. In this case, we say that $R_f(\theta)$ \emph{bifurcates} at $x$.

Let $f$ be a non-recurrent polynomial, i.e., $f$ has a connected Julia set but no bounded Fatou components and no recurrent critical points. It is known that such $f$ has a locally-connected Julia set (see \cite{CJY94,Yin99}). One can then locate its critical points by the external rays landing on them. We begin by fixing $\ell$ external rays $R_f(\theta_1),\cdots, R_f(\theta_\ell)$ that land at the critical values $v_1,\cdots v_\ell$, respectively. Then, at each critical point $c_j$ with $f(c_j)=v_i$, let $\Theta(c_j)$ be the set formed by the arguments of $\tu{deg}(f,c_j)$ external rays landing at $c_j$ among the pullbacks of $R_f(\theta_i)$; the collection
\[\Theta(f):=\{\Theta(c_1),\ldots,\Theta(c_m)\}=:\{\Theta_1,\ldots,\Theta_m\}\]
is called a \emph{critical portrait} of $f$. Note that the critical portraits of $f$ are not necessarily unique but there are finitely many of them. It is also easy to check that $\Theta(f)$ satisfies the following properties.
\begin{description}
	\item[(CP1)] The map $m_d:\theta\mapsto d\cdot \theta\,(\tu{mod}\,\mathbb{Z})$ sends each $\Theta_j$ to a singleton.
	\item[(CP2)] Any two distinct $\Theta_i,\Theta_j$ are \emph{unlinked}, that is, $\Theta_i,\Theta_j$ can be totally covered by two disjoint internals in $\partial \mb{D}$.
	\item[(CP3)] $\sum_{1\leq j\leq m}(\#\Theta_j-1)=d-1$.
\end{description}

Purely combinatorial,  a finite collection of finite subsets $\Theta=\{\Theta_1,\ldots,\Theta_m\}$ of $\mathbb{T}$ is called a \emph{critical portrait} of degree $d$ if properties (CP1), (CP2), and (CP3) are satisfied.

For any $r>0$, we denote by $\mathcal{S}_d(r)$ the subset of $\mathcal{S}_d$ comprising polynomials with escaping rates of all critical values equal to $r$. 
Any polynomial $f$ in $\mathcal{S}_d(r)$ can also be associated with a critical portrait
$$\Theta(f):=\{\Theta(c_1),\ldots,\Theta(c_m)\}$$ such that each $\Theta(c_j)$ comprises the arguments of ${\rm deg}(f,c_j)$ external rays bifurcation  at $c_j$ which are mapped by $f$ to a single ray , and that $m_d(\Theta(c_i))=m_d(\Theta(c_j))$ if $f(c_i)=f(c_j)$. Conversely, it is proved in \cite[Theorem 3.7]{Ki05} that for each critical portrait $\Theta$ and $r>0$, there exists a unique polynomial $f_r(\Theta)\in \mathcal{S}_d$ with $\Theta(f_r(\Theta))=\Theta$, and the map
\[R_{\mathcal{C}_d}(\Theta):(0,\infty)\to \mathcal{S}_d,\quad r\mapsto f_r(\Theta)\]
is injective. The simple curve $R_{\mathcal{C}_d}(\Theta)\subseteq \mathcal{S}_d$ is called the \emph{parameter ray} at the critical portrait $\Theta$.

\section{Construction of nested disk systems from rational maps}
Following Cui--Tan \cite{CT10}, the key to prove the convergence of a parameter ray is to control the distortion near infinity  of a sequence of univalent maps obtained from the Thurston algorithm. For this purpose, we develop a pullback method to construct $m$-nested, $\lambda$-scattered disk systems near the Julia set for a given rational map and then apply Theorem~\ref{thm_nest}. Our method is similar to that given in \cite[Section 8.3]{CT15} but is a generalization from geometrically finite rational maps to rational maps with no recurrent critical points.

\subsection{Basic settings and notations}\label{subsection_setting}

Throughout the present section, we assume that $f$ is a rational map of degree $d\geq 2$ with a non-empty Fatou set and no recurrent critical points. We also assume that $X_0$ is a finite set in the Julia set $\mc{J}_f$ with the following properties.
\begin{description}
	\item[(A1)] $X_0$ contains no recurrent points, i.e., if $x\in X_0$, then $x\not\in \omega(x)$;
	\item[(A2)] $X_0$ contains no periodic points and $\ol{{\rm Orb}(X_0)}$ is disjoint
	from parabolic cycles;
	\item[(A3)] $X_0$ contains all critical points of $f$ in the Julia set;
	\item[(A4)] If $x,y\in X_0$ with $f^n(x)=y$ for some $n\geq1$, then $f(x),\ldots,f^{n-1}(x)\in X_0$.
\end{description}

\begin{rmk}
	To simplify the proof of Theorem~\ref{main_thm}, we have not written $X_0$ as the most general case. In fact, only requirement~(A1) is essential for proving the distortion theorem, namely Theorem~\ref{thm:Cui_Tan}, which is the most crucial result herein. Meanwhile, properties(A2), (A3), and (A4) are merely for simplicity of notations and statements.
	
	Precisely, we claim that Theorem~\ref{thm:Cui_Tan} still holds in the following settings. Let $f$ be any rational map with a non-empty Fatou set and $X_0\subset \olC$ be a finite set satisfying
	\begin{enumerate}
		\item $X_0$ contains no recurrent points;
		\item the set $\ol{{\rm Orb}(X_0)}$ is disjoint from the $\omega$-limit set of recurrent critical points.
	\end{enumerate}

	This condition   is the most general one that we can think of to ensure that Theorem~\ref{thm:Cui_Tan} works in the present method. The proof under this setting must deal with parabolic and repelling cycles, for which we must combine our argument and the one given in \cite[Proposition 8.6]{CT15}.
\end{rmk}

Henceforth, we assume that $X_0$ satisfies properties~(A1)--(A4). To describe points in $X_0$ clearly, we introduce a partial order $\prec$ on $X_0$. For distinct $x,y\in X_0$, we say that $x\prec y$ if $x\in \ol{\tu{Orb}(y)}$. By properties (A1) and (A2), it is easy to check that such $\prec$ is strictly a partial order on $X_0$.

Subsequently, the given set $X_0$ can be decomposed under this partial order. Let $L_1$ be the maximal elements in $X_0$ under the partial order $\prec$. Inductively, for $k\geq 2$, let $L_k$ be the set of maximal elements in $X_0\setminus\cup_{1\leq i\leq k-1} L_i$. Then, there exists $N>0$ such that $$X_0=L_1\sqcup \cdots\sqcup L_N.$$
Meanwhile, given any $x\in X_0$, the set $X_0$ can be divided into two disjoint parts:
$$X_{\prec x}:= \{y\in X_0:y\prec x\}\tu{\quad and\quad }X_{\nprec x}:=X_0\setminus X_{\prec x}.$$ Make sure that $x\notin X_{\prec x}$ but $x\in X_{\nprec x}$ according to (A1).

Let $X_n:=\cup_{0\leq k\leq n}\cup f^{-k}X_0$ and $X_\infty:=\cup_{n\geq 0}X_n$. The \emph{level} of a point $x\in X_\infty$, denoted by $n(x)$, is the minimal integer $n$ such that $f^n(x)\in X_0$. Clearly, such an $n$ is unique.

To simplify the discussion, according to Lemma~\ref{mane_lemma} and by choosing suitable $\delta_0,\eta_0>0$, one can further assume the following.
\begin{description}
	\item[(B1)] The disks $B(x,\delta_0),x\in X_0$ are pairwise disjoint.
	\item[(B2)] For any $x,y\in X_0$ (allowing for $x=y$), the case $B(y,\delta_0)\cap \ol{\tu{Orb}(f(x))}\neq\emptyset$ occurs if and only if $y\in X_{\prec x}$. Equivalently, the condition $y\in X_{\nprec x}$ implies that $B(y,\delta_0)$ is disjoint from $\ol{\tu{Orb}(f(x))}$.
	\item [(B3)] All disks $B(x,\delta_0),x\in X_0,$ are $(\delta,\eta_0)$-backward stable for some $\delta$.	
\end{description}

\subsection{Well-chosen neighborhood sequences of $X_0$}

To construct $m$-nested, $\lambda$-scattered disk systems in a neighborhood of $\mc{J}_f$ labeled by a suitable subset of $X_n$ for every $n\geq0$, we first choose for each $x\in X_0$ a sequence of nested neighborhoods with some geometric and pullback properties.

\begin{lema}\label{lema_x_0}
	One can find constants $C$ and $\delta_1<\delta_0/2$ such that, for each $x\in X_0$, there exists a sequence of nested disks $\mc{N}_x=\{(E_{x,n},O_{x,n},U_{x,n})\}_{n\geq 1}$ contained in $B(x,\delta_1)$, fulfilling the following points:
	\begin{enumerate}
		\item $E_{x,n}\subseteq O_{x,n}\subseteq U_{x,n}$ are nested topological disks with $x\in O_{x,n}$ {but} $x\notin \ol{E}_{x,n}\subseteq\mc{F}_f$;	
		
		\item $\tu{Shape}\,(E_{x,n},\xi_{x,n})\leq C$ for some $\xi_{x,n}\in E_{x,n}$ and $\tu{diam}\, U_{x,n}\leq C\,\tu{diam}\, E_{x,n}$;
		
		\item for any $x,y\in X_0$ with {$y\in X_{\nprec x}$} and any $m,n,k\geq1$, the preimage $f^{-k}(U_{y,m})$ is disjoint from $E_{x,n}$;
		\item for any $x,y\in X_0$ with {$y\in X_{\nprec x}$} and any $k,n \geq1$, the components of $f^{-k}B(y,\delta_1)$ intersecting $\partial U_{x,n}$ are disjoint from $O_{x,n}$.
	\end{enumerate}
\end{lema}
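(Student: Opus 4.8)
The plan is to build the system $\mc N_x$ separately for each $x\in X_0$. Note first that conditions (3) and (4) refer only to the fixed round disks $B(y,\delta_1)$ with $y\in X_{\nprec x}$, never to the systems $\mc N_y$ of other points, so no induction along $\prec$ is needed. The construction rests on two quantitative facts about $\mc J_f$ near $x$, both coming from Ma\~n\'e's Lemma~\ref{mane_lemma} — which applies at \emph{every} point of the compact set $\ol{\tu{Orb}(x)}$ by (A1), (A2) and the absence of recurrent critical points, and which by a Lebesgue-number argument yields a \emph{uniform} scale $\delta_*$ and degree bound $\eta_0$ for backward-stable disks centred along $\ol{\tu{Orb}(x)}$ — together with Lemma~\ref{controlling_holomorphic_distortion} and the Koebe theorem. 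Namely: \emph{(i)} (local porosity with bounded distortion) there is $\kappa\in(0,1)$ so that every small ball $B_e(x,s)$ contains a round Fatou disk of diameter $\geq\kappa s$; and \emph{(ii)} (super-linear shrinking) after shrinking $\delta_1$, every component $B$ of $f^{-k}B(y,\delta_1)$ with $k\geq1$, $y\in X_{\nprec x}$, that comes close to $x$ satisfies $\tu{diam}_e B\leq\tfrac14\tu{dist}_e(x,B)$.

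For the construction itself I would first shrink $\delta_0$ in (B1)--(B3) so that the backward-stability bound there is smaller than any prescribed threshold and so that each $B(y,\delta_0)$ is disjoint from the finitely many attracting and parabolic cycles of $f$ (these lie in $\mc F_f$ or are parabolic, hence are disjoint from $\ol{\tu{Orb}(X_0)}$ by (A2); recall ``no recurrent critical points'' rules out Siegel discs, Herman rings and Cremer points). Fix a round Fatou disk $\ol{B_e(p,4\sigma)}\sbn\mc F_f\cap B(x,\delta_0/4)$ with $p$ non-exceptional. Since $\bcu_{k\geq1}f^{-k}(x)$ is dense in $\mc J_f$, choose $q_n\to x$ with $f^{k_n}(q_n)=x$ and $k_n\to\infty$; let $W_n$ be the component of $f^{-k_n}B(x,\delta_0)$ through $q_n$, so that $f^{k_n}\colon W_n\to B(x,\delta_0)$ is proper of degree $\leq\eta_0$, $W_n$ is a Jordan disk, $\tu{diam}\,W_n\to0$, and $x\notin W_n$ because $f^{k_n}(x)\notin B(x,\delta_0)$ by (B2). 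Let $E_{x,n}$ be a component inside $W_n$ of $f^{-k_n}B_e(p,\sigma)$, with the corresponding components of $f^{-k_n}$ of $B_e(p,2\sigma)$ and $B_e(p,4\sigma)$ as collars; then $\ol E_{x,n}\sbn\mc F_f$, $x\notin\ol E_{x,n}$, and Lemma~\ref{controlling_holomorphic_distortion} supplies a uniform $C$ with $\tu{Shape}(E_{x,n},\xi_{x,n})\leq C$ (taking $\xi_{x,n}$ the preimage of $p$) and $\tu{diam}$ of the first collar $\leq C\,\tu{diam}\,E_{x,n}$. Using (i) I would further arrange $\tu{dist}_e(x,E_{x,n})\leq C\,\tu{diam}_e E_{x,n}$, if necessary by replacing $E_{x,n}$ with the porous Fatou disk from (i) at scale $\asymp\tu{diam}_e W_n$ and rerunning the distortion bookkeeping. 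Writing $\rho_n:=\tu{diam}_e E_{x,n}\to0$, set $O_{x,n}:=B_e(x,(C{+}1)\rho_n)$ and $U_{x,n}:=B_e(x,4(C{+}1)\rho_n)$; for large $n$ these contain $x$ and $E_{x,n}$, lie in $B(x,\delta_1)$, and obey $\tu{diam}\,U_{x,n}\leq C'\tu{diam}\,E_{x,n}$, so (1) and (2) hold after discarding finitely many indices.

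I would then verify (3) and (4). For (3) it is equivalent to show $f^k(E_{x,n})\cap B(y,\delta_1)=\es$ for all $k\geq1$, $y\in X_{\nprec x}$. Since $E_{x,n}$ is a connected subset of $\mc F_f$ that may be taken as close to $x$ as we wish, $\tu{Orb}(f(E_{x,n}))$ lies in a small neighbourhood of $\ol{\tu{Orb}(f(x))}$ during the first $K$ iterates (continuity), and afterwards, once $f^k(E_{x,n})$ has grown, it lies in one of the finitely many Fatou components of diameter above a fixed threshold, each of which is eventually carried into a fixed small neighbourhood $\mc T$ of the attracting/parabolic cycles while its closure still contains the corresponding $f^k(x)$. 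By (B2), $\tu{dist}(f^k(x),y)\geq\delta_0>\delta_1$ for all $k\geq0$ and $y\in X_{\nprec x}$, and $\mc T$ is disjoint from every $B(y,\delta_1)$ after shrinking $\delta_1$; hence $\tu{Orb}(f(E_{x,n}))$ misses $\bcu_{y\in X_{\nprec x}}B(y,\delta_1)$ when $E_{x,n}$ is close enough to $x$, which is (3). For (4), let $B$ be a component of $f^{-k}B(y,\delta_1)$ with $k\geq1$, $y\in X_{\nprec x}$, that meets $\partial U_{x,n}$, so $\tu{dist}_e(x,B)\leq 4(C{+}1)\rho_n\to0$; by (ii), $\tu{diam}_e B\leq(C{+}1)\rho_n$, whence $B$ lies in the annulus $\{2(C{+}1)\rho_n<|w-x|<6(C{+}1)\rho_n\}$ and is disjoint from $O_{x,n}=B_e(x,(C{+}1)\rho_n)$.

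The hard part is proving (i) and (ii). Both follow the same device: transport $B_e(p,\sigma)$ (respectively $B(y,\delta_1)$, which is deep inside $B(y,\delta_0)$ because $\delta_1\ll\delta_0$) back along an orbit segment of $x$ that returns $\delta_*$-close to $\ol{\tu{Orb}(x)}$, use the uniform backward stability there to get bounded-degree disk preimages of uniformly small diameter, and convert this via Lemma~\ref{controlling_holomorphic_distortion} and Koebe into the linear comparison $\tu{diam}\asymp(\text{scale})$ for (i) and the super-linear comparison $\tu{diam}\ll\tu{dist}(\cdot,x)$ for (ii), the extra smallness in (ii) coming from the large modulus of $B(y,\delta_0)\sm\ol{B(y,\delta_1)}$. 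I expect the genuinely delicate point to be the \emph{uniformity of the constants as the scale tends to $0$} — that a Fatou chunk of definite relative size survives at \emph{every} small scale near $x$, and that preimage components near $x$ are definitely smaller than their distance to $x$. This is exactly the local porosity of $\mc J_f$ at the non-recurrent point $x$ (for a polynomial, the John property of the basin of infinity, cf.\ \cite{CJY94}; in general an avatar of \cite[\S8.3]{CT15}), and it is here that Ma\~n\'e's Lemma (items 1 and 4, giving both a uniform diameter bound and $\tu{diam}\to0$) together with the absence of recurrent critical points is essential. A secondary point needing care is (3) when $\tu{Orb}(E_{x,n})$ traverses a ``large'' Fatou component whose closure meets some $B(y,\delta_1)$: such a visit occurs near $f^k(x)$, which is $\delta_0$-far from $y$, so a slightly smaller $\delta_1$ and a slightly closer $E_{x,n}$ still suffice.
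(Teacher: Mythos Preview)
Your construction differs fundamentally from the paper's: you build $E_{x,n}$ by pulling back a fixed Fatou disk $B_e(p,\sigma)$ near $x$ along a \emph{backward} orbit $q_n\to x$ (with $f^{k_n}(q_n)=x$), and then take $O_{x,n},U_{x,n}$ to be round balls about $x$. The paper instead pulls back fixed triples $(E_z,B(z,\delta_z),B(z,2\delta_z))$, centred at points $z$ in a finite cover of the \emph{forward} orbit closure $\ol{\tu{Orb}(x)}$, under $f^n$ along the forward orbit of $x$. This difference breaks your argument for~(3). You assert that $f^k(E_{x,n})$ stays near $\ol{\tu{Orb}(f(x))}$ for the first $K$ iterates and afterwards lies in a large Fatou component; but for $K<k<k_n$ the set $f^k(E_{x,n})\sbn f^k(W_n)$ is still a preimage of $B_e(p,\sigma)$ of depth $k_n-k$, hence small, and it tracks $f^k(q_n)$ --- a point on a \emph{backward} orbit of $x$ --- not $f^k(x)$. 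Since preimages of $x$ are dense in $\mc J_f$, nothing prevents some intermediate $f^k(q_n)$ from lying arbitrarily close to a given $y\in X_{\nprec x}$ (take any $w\in f^{-m}(x)$ near $y$ and then $q_n\in f^{-l}(w)$ near $x$); for such a choice $f^l(E_{x,n})$ meets $B(y,\delta_1)$ and your sufficient condition for~(3) fails. Your dichotomy simply does not cover this middle range of iterates.

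The paper's forward-orbit pullback avoids this cleanly: for $0\le k\le n$ the disk $f^k(O_{x,n})$ contains $f^k(x)$ and has diameter $<\delta_0/2$, so by~(B2) it misses every $B(y,\delta_0/2)$ with $y\in X_{\nprec x}$; for $k>n$ one has $f^k(E_{x,n})=f^{k-n}(E_z)$, and the finitely many disks $E_z$ were chosen at the outset to avoid all preimages $f^{-j}B(z',2\delta_{z'})$ with $j\ge k_0$. Your arguments for~(1),~(2), and claim~(ii) (hence~(4)) are essentially sound --- the super-linear shrinking follows from the large modulus of $B(y,\delta_0)\sm\ol{B(y,\delta_1)}$ together with $x\notin B'$ and a round-core-annulus estimate --- but~(3) cannot be repaired without either constraining the backward orbit of $q_n$ to avoid $\bcu_{y\in X_{\nprec x}}B(y,\delta_1)$, which in effect reintroduces the forward-orbit alignment, or switching to the paper's pullback scheme.
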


\begin{proof}
	Let $K:=\ol{\cup_{k\geq 0}f^k(X_0)}$. By Lemma~\ref{mane_lemma} and property~(A2), there exist $\delta_z$ and $\eta_z$ for each $z\in K$, such that
	\[{\rm diam}(B_k)<\delta_0/2\quad\text{and}\quad {\rm deg}(f^n|_{B_k})<\eta_z\]
	for any component $B_k$ of $f^{-k}B(z,4\delta_z)$ with $k\geq 0$.
	By the compactness of $K$, one can extract a finite subcover $\{B(z,\delta_{z})\}_{z\in \Sigma}$ of $K$. Let $\kappa_0:=\max\{\eta_z|\ z\in \Sigma\}$.
	
	Using Lemma~\ref{mane_lemma} again, we can find an integer $k_0>0$ such that the diameter of any component of $f^{-k}(B(z,2\delta_z)),z\in\Sigma$ is less than
	$\min_{z\in \Sigma}\delta_z/2$, provided that $k\geq k_0$. It follows that, for each $z\in \Sigma$, the set
	\begin{equation}\label{eq:k0}
	A_z:=B(z,\delta_z)\setminus\ol{\cup_{k\geq k_0}\cup_{y\in\Sigma}f^{-k}B(y,2\delta_y)}
	\end{equation}
	is non-empty. Thus, one can choose a round disk $E_z$ in $A_z\cap \mathcal{F}_f$ (recall that $\mc{F}_f$ is the Fatou set). This means that $E_z$ is disjoint from the preimages $f^{-k}B(z,2\delta_y)$ for all $k\geq k_0,y\in\Sigma$.
	
	Given $x\in X_0$, we begin to construct $\{(E_{x,n},O_{x,n},U_{x,n})\}_{n\geq 1}$ by these $B(z,\delta_z)$ and $E_z$. For every $n\geq1$, the point $x_n:=f^n(x)$ belongs to a $B(z,\delta_z)$ with $z\in \Sigma$. By pulling back, locally at $x$, let $U_{x,n}\supseteq O_{x,n}$ be the components of $f^{-n}B(z, 2\delta_z)$ and $ f^{-n}B(z, \delta_z)$ containing $x$, and $E_{x,n}$ a component of $f^{-n}(E_z)$ contained in $O_{x,n}$. Clearly, property~1 follows directly from this construction.
	
	For property~2, let $n\geq1$ be any integer. Then, there is a point $z\in \Sigma$ such that
	$$f^n: (O_{x,n},U_{x,n})\to (B(z,\delta_z),B(z,2\delta_z))$$
	is a branched covering of degree less than $\kappa_0$. Let $\xi_{x,n}\in E_{x,n}$ such that $f^n(\xi_{x,n})=\xi_z$, the center of disk $E_z$.
	By Lemma~\ref{controlling_holomorphic_distortion}, there exists a constant $C_1$, depending only on $\kappa_0$, such that
	\begin{eqnarray*}
		\tu{Shape}\,(E_{x,n},\xi_{x,n})&\leq& C_1\tu{Shape}\,(E_z,\xi_z)=C_1,\\
		\tu{diam}\,U_{x,n}/\tu{diam}\,E_{x,n}&\leq& C_1\tu{diam}\,B(z,2\delta_z)/\tu{diam}\,E_z.
	\end{eqnarray*}
	Set $C:=\tu{max}\,\{2C_1\tu{diam}\,B(z,2\delta_z)/\tu{diam}\,E_z,z\in \Sigma\}$, then (2)    follows.
	
	Concerning (3),   we only need to prove that it holds for all $m\geq k_0$ and $n,k\geq 1$, where $k_0$ is given in \eqref{eq:k0}, because by replacing each original $U_{y,m},y\in X_0$, with $U_{y,k_0+m}$, property~4 follows. The argument of the claim
	goes by contradiction . Assume that $f^{-k}U_{y,m}\cap E_{x,n}\neq \es$ for some $m\geq k_0$ and $n,k\geq 1$. We first observe that $k>n$: otherwise $U_{y,m}\cap f^{k}U_{x,n}\neq \es$,  and it follows that $$\tu{dist}\,(y,f^k(x))\leq \tu{diam}\,U_{y,m}+\tu{diam}\,f^kU_{x,n} <\delta_0,$$
	which contradicts (B2). Therefore, we have $f^{-(k-n)}U_{y,m}\cap E_z\neq \es$ for some $z\in\Sigma$ with $f^n(E_{x,n})=E_z$. However, by construction, $U_{y,m}$ is a component of $f^{-m}B(z',2\delta_{z'})$ for some $z'\in\Sigma$, and $f^{-(k-n+m)}B(z',2\delta_{z'})$ containing $f^{-(k-n)}U_{y,m}$, is disjoint from $E_z$ because $m\geq k_0$. This leads to a contradiction.
	
	For statement~4  , by applying Lemma~\ref{mane_lemma}, we choose $\delta_1<\delta_0/2$ such that the diameter of any component of $f^{-k}B(y,\delta_1)$, $k\geq 0,y\in X_0$, is less than $\tu{min}\{\delta_z,z\in\Sigma\}$. We claim that (5)  holds under such a choice of $\delta_1$. Indeed, any component $B_k$ of $f^{-k}B(y,\delta_1)$ for some $k\geq 1$ and $y\in X_0$ such that $B_k\cap\partial U_{x,n}\not=\emptyset$ must have $k\geq n$. If not, we have $B(y,\delta_1)\cap \partial f^{k}U_{x,n}\neq \es$, and thus
	$$\tu{dist}(y,f^k(x))<\tu{diam}\,f^{k}U_{x,n}+\tu{diam}\,B(y,\delta_1)<\delta_0.$$
	Therefore, $B(y,\delta_0)\cap\ol{\tu{Orb}(x)}\neq \emptyset$,
	which contradicts (B2). Thus, $f^n(B_k)$ is a component of $f^{-(k-n)}B(y,\delta_1)$. Note that this component intersects the boundary of $f^n(U_{x,n})(=B(z,2\delta_z)\tu{ for some }z\in\Sigma)$. The choice of $\delta_1$ guarantees that ${\rm diam}f^n (B_k)<\delta_z$, thus $f^n(B_k)$ is disjoint from $B(z,\delta_z)$. This implies that $B_k\cap O_{x,n}=\es$ from the definition of $O_{x,n}$. Hence, statement~4 is proved.
	
	By discarding finitely many elements in $\mc{N}_x$ if necessary, we can assume that all disks in $\mc{N}_x$ are contained in $B(x,\delta_1)$ because $\tu{diam}\, U_n\to 0$ as $n\to\infty$. Thus, the lemma is complete.
\end{proof}

\subsection{Construction of nested disk systems from rational maps  }

As we remember, $X_n:=\cup_{0\leq k\leq n}f^{-k}(X_0)$ and $X_\infty=\cup_{n\geq 0}X_n$. We are ready to prove the main proposition, herein.
\begin{prop}\label{prop:nested_disk_system}
	Let $f$ be a rational map and $X_0$ be a finite set with the properties given in Section~\ref{subsection_setting}, and let $W$ be an open set in $\olC$ containing $\ol{X}_\infty$. Then, there is a universal constant $0<\lambda<1$ such that, for any $m>0$ and every integer $n\geq 0$, one can find a subset $\mc{X}_{\leq n}$ of $X_n$ and an
	$m$-nested, $\lambda$-scattered disk system $\{(D_x'',D_x',D_x)\}_{x\in \mc{X}_{\leq n}}$ in $W$ satisfying
	\begin{equation}\label{eq:3}
	\bigcup_{x\in X_n}W_x\subseteq \bigcup_{x\in \mc{X}_{\leq n}} D_x'',
	\end{equation}
	where $W_x,x\in X_n,$ is the component of $f^{-n(x)}W_{x_0}$ containing $x$ with $x_0:=f^{n(x)}(x)$ and $W_{x_0}$ is an arbitrary disk nested in $D_{x_0}''$.
\end{prop}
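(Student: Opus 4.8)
The plan is to build the disk system by induction on the partial-order stratification $X_0 = L_1 \sqcup \cdots \sqcup L_N$ described in Section~\ref{subsection_setting}, and simultaneously over the level $n$. First I would handle the base case: for the maximal points $x \in L_1$, the orbit of $f(x)$ does not accumulate on any $y \in X_0$ other than those $y \prec x$, i.e.\ on $X_{\prec x} = \emptyset$ by (A1); so the neighborhood sequences $\mc{N}_x = \{(E_{x,n}, O_{x,n}, U_{x,n})\}$ from Lemma~\ref{lema_x_0} can be used directly — choose $n$ large enough (depending on $m$) that $\tu{mod}(U_{x,n} \sm \ol{O_{x,n}})$ and $\tu{mod}(U_{x,n} \sm \ol{E_{x,n}})$ both exceed $m$ (possible because pulling back the fixed annuli $B(z,2\delta_z)\sm\ol{B(z,\delta_z)}$ under bounded-degree maps only divides the modulus by at most $\kappa_0$, by Lemma~\ref{controlling_holomorphic_distortion}(3)), and set $(D_x'', D_x', D_x) := (E_{x,n}, O_{x,n}, U_{x,n})$ at level $0$. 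Then pull back: for $x \in X_n$ with $x_0 = f^{n(x)}(x) \in X_0$, let $D_x$, $D_x'$, $D_x''$ be the components of $f^{-n(x)}$ of $D_{x_0}$, $D_{x_0}'$, $D_{x_0}''$ containing $x$. The modulus condition is preserved up to the factor $\eta_0$ from backward stability (B3), so choosing the initial $m$ inflated by this factor secures the $m$-nested modulus bound uniformly in $n$.

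The second and more delicate task is to verify that the resulting collection is genuinely a \emph{nested disk system}, i.e.\ that any two of the disks $D_x, D_y$ are either disjoint or nested, and that it is $\lambda$-scattered with a \emph{universal} $\lambda$. Nestedness comes from the unlinking built into Lemma~\ref{lema_x_0}: property~(3) of that lemma says $f^{-k}(U_{y,m})$ misses $E_{x,n}$ whenever $y \in X_{\nprec x}$, and property~(4) says the preimage components of $B(y,\delta_1)$ meeting $\partial U_{x,n}$ avoid $O_{x,n}$; these are exactly the statements needed to rule out "linked" (partially overlapping) configurations among pulled-back disks carried by points in different strata, while points within a single orbit closure are handled by the partial order $\prec$ itself. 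I would phrase the induction so that $\mc{X}_{\leq n}$ consists of those $x \in X_n$ whose disk $D_x$ is maximal among the constructed disks — discarding the rest — and check that \eqref{eq:3} holds because every $W_x$ (a small disk inside $D_x''$) is swallowed by the $D_{z}''$ for the maximal $z$ above it.

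For the $\lambda$-scattered property the key input is Lemma~\ref{lema_area}: at each $x$, the set $V_x$ (union of all strictly smaller $D_y$'s) sits inside $U_{x,n(x)}$-type disks satisfying the Shape and diameter bounds of Lemma~\ref{lema_x_0}(2), uniformly (the constant $C$ there is universal), so applying Lemma~\ref{lema_area} with $W$, $W' = O_{x,\cdot}$, and the fixed annulus modulus $m$ from the construction of the $E_z$'s gives $\tu{Area}(\rho_*, h(V_x)) \le \lambda \tu{Area}(\rho_*, h(D_x))$ with $\lambda$ depending only on $W$, $C$, and that fixed modulus — hence universal in $n$ and $m$. The main obstacle, as I see it, is the bookkeeping that makes the pullback genuinely nested at \emph{every} level simultaneously: one must ensure that when a disk $D_{y,n}$ (a component of $f^{-n}$ of some $D_{y_0}$) meets a disk $D_{x,n'}$ carried by a point in an incomparable stratum, they are actually nested and not linked, and that the "scattered" estimate survives the pullback without the area ratio degrading. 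This is precisely where properties~(3)--(4) of Lemma~\ref{lema_x_0} and the level/stratum induction must be orchestrated carefully; I expect the proof to proceed stratum by stratum ($L_1$, then $L_2$ with $X_{\prec x}$ already controlled, etc.), at each stage inserting the new disks inside the "holes" left by the previous strata, and reading off scatteredness from Lemma~\ref{lema_area} one point at a time.
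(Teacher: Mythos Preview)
Your overall architecture (stratify by $L_1\sqcup\cdots\sqcup L_N$, use Lemma~\ref{lema_x_0} for the base disks, pull back, and read off scatteredness from Lemma~\ref{lema_area}) matches the paper's, but the concrete assignment $(D_x'',D_x',D_x):=(E_{x,n},O_{x,n},U_{x,n})$ fails in two places.

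First, by Lemma~\ref{lema_x_0}(1) the disk $E_{x,n}$ lies in the Fatou set and satisfies $x\notin\ol{E}_{x,n}$, so taking $D_x''=E_{x,n}$ violates the requirement $x\in D_x''$ in the definition of an $m$-nested system; and then ``the component of $f^{-n(x)}D_{x_0}''$ containing $x$'' is undefined. Second, your mechanism for securing $\tu{mod}(D_x\sm\ol{D_x'})\ge m$ is wrong: the annulus $U_{x,n}\sm\ol{O_{x,n}}$ is a bounded-degree pullback of a \emph{fixed} annulus $B(z,2\delta_z)\sm\ol{B(z,\delta_z)}$, so by Lemma~\ref{controlling_holomorphic_distortion}(3) its modulus lies in a fixed bounded interval for all $n$; it does not grow with $n$ and cannot be made to exceed an arbitrary $m$.

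The paper resolves both issues by decoupling the roles of the five disks. For each $x\in X_0$ one fixes a \emph{single} triple $(E_x,O_x,U_x)\in\mc{N}_x$ (chosen only for backward-stability scale, not for modulus), and then \emph{introduces new} disks $D_x'\subset O_x\sm\ol{E_x}$ and $D_x''\subset D_x'$, both centered at $x$, with $D_x'$ small enough that $\tu{mod}(O_x\sm\ol{D_x'})\ge\eta_0 m$; this is how the arbitrary-$m$ requirement is met. The set $E_x$ is never one of the triple $(D_x'',D_x',D_x)$: its job is solely to serve as a region inside $D_x$ disjoint from all smaller $D_y$'s (Claim~5(3)), which is exactly the input to Lemma~\ref{lema_area} for scatteredness. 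Two further mechanisms you have not identified: the subset $\mc{X}_{\le n}$ is defined via the \emph{hidden/non-hidden} dichotomy (a pullback $U$ is discarded as ``hidden'' the moment it covers a point of a higher stratum, whence $U\subset D_y''$ already), which is what makes \eqref{eq:3} work (Claim~4); and $D_x$ is not the raw pullback $U_x$ but is obtained from $U_x$ by \emph{trimming} off all higher-level $D_y$'s that cross $\partial U_x$ (using Lemma~\ref{lema_x_0}(4)/Claim~5(2) to ensure the trimmed region still contains $O_x$), which is what forces genuine nestedness.
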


\begin{proof}
	We adopt the quantities $C$, $\delta_0$, $\delta_1$, and $\mc{N}_x=\{E_{x,n},O_{x,n},U_{x,n}\}_{n\geq 0},x\in X_0$ in Lemma~\ref{lema_x_0}.
	Recall that $X_0$ can been decomposed into $L_1,\cdots, L_N$ according to the partial order $\prec$. Our method of building the desired nested disk systems begins with constructing suitable disk systems $\mc{U}_{k,i},1\leq k\leq N, i\geq0$. The construction proceeds by induction from $L_1$ to $L_N$.
	
	For $k=1$, let $\mu_1=\tu{min}\{\tu{dist~}(\ol{X}_{\infty},\partial W),\delta_1\}$. By property~(B3) and Lemma~\ref{mane_lemma}, one can choose sufficiently small nested disks $(E_x,O_x,U_x)\in \mc{N}_x$ for all $x\in L_1$ such that the disks $U_x$ are $(\mu_1,\eta_0)$-backward stable. To study the pullbacks of such disks $\{U_x\}_{x\in L_1}$, we assume $$\mc{U}_{1,i}=\cup_{x\in L_1}\tu{Comp}(f^{-i}(U_x))\tu{~and~}\mc{X}_{1,i}=\cup_{x\in L_1}f^{-i}(x)\tu{ for all }i\geq 0.$$ Clearly, all elements in families $\mc{U}_{1,i}$ and $\mc{X}_{1,i}$ have the same \emph{level} $i$.
	\vskip 0.3cm
	\tb{Claim~1.} There is a one-to-one correspondence between families $\mc{X}_{1,i}$ and $\mc{U}_{1,i}$.
	\begin{proof}
		Let $x_i\in \mc{X}_{1,i}$ with $x_0=f^{i}(x_i)\in X_0$. Consider the pullback
		$$g_{x_ix_0}:=f^{i}:U_{x_i}\to U_{x_0}\hspace{0.4cm}x_i\mt x_0$$,
		where $U_{x_i}$ is the unique component of $f^{-i}(U_{x_0})$ containing $x_i$. It is important to note that $\tu{Crit}(g_{x_ix_0})\subseteq \{x_i\}$, in other words, the possible critical points of $g_{x_ix_0}$ can be only $x_i$. Otherwise, there is a point other than $x_i$ in $U_{x_i}$ that is iterated to a critical point $c$ of $f$. Under our assumption on $X_0$, we have $x_0\neq c\in X_0$. Because $c\in U_{x_i}\cup\cdots\cup U_{x_1}$, then $\tu{Orb}(c)\cap U_{x_0}\neq \es$. Condition~(B2) implies that $x_0\prec c$, but this is impossible because $x_0$ is a maximal one  in $X_0$. Thus, $U_{x_i}\cap \mc{X}_{1,i}=\{x_i\}$, and the claim holds.	
	\end{proof}
	\vskip 0.3cm
	\tb{Claim~2.} Let $\mc{X}_1:=\cup_{i\geq 0}\mc{X}_{1,i}$ and $\mc{U}_1:=\cup_{i\geq 0}~\mc{U}_{1,i}$. Then for any disks $U_{x_i},U_{y_j}\in \mc{U}_1$ with $x_i\neq y_j\in \mc{X}_1$ and with $i=n(x_i)\leq n(y_j)=j$, we have that $x_i\not\in U_{y_j}$, i.e., the elements of $\mc{U}_1$ with higher levels cannot contain points of $\mc{X}_1$ with lower levels.
	\begin{proof}
		If $i=j$, it is obvious that $U_{x_i}\cap U_{y_j}=\emptyset$ because the disks among $\mc{U}_{1,0}$ of level $0$ are mutually disjoint. If $i<j$, the situation $x_i\in U_{y_j}$ implies that $f^{j}(x_i)\in U_{y_0}=f^jU_{y_j}$ with $y_0\in L_1$. This means that $U_{y_0}$ intersects the orbit of $x_0=f^i(x_i)$. By condition~(B2), we have that $y_0\prec x_0$, which is impossible. Hence, the claim is proved.
	\end{proof}
	Now, for all $x\in L_1$, we pick an open disk $D_x'$ centered at $x$ and nested in $O_x\setminus \ol{E}_x$ such that
	$$\tu{mod~}(O_x\setminus\ol{D'}_x)\geq \eta_0m$$, and we consider the set
	$\Omega_x:=D_x'\setminus\ol{\cup\{U_y\in \mc{U}_1\setminus\{U_x\},U_y\cap \partial D_x'\neq\emptyset\}}$. As a consequence of Claim~2 and of the fact that $\tu{diam~}U_y\to 0$ as $n(y)\to 0$, there exists a unique component of $\Omega_x$ containing $x$. Let $D_x''$ be an open disk centered at $x$ and compactly contained in this component.
	\vskip 0.3cm
	Inductively, for $k=2,\cdots,N$, let $\mu_k$ be the minimum among the following positive values:
	\begin{equation}\label{eq:mu}
	\tu{dist}(y,\partial D_y''),\tu{dist}(\partial D_y'',\partial D_y'),\tu{dist}(\partial U_y,\partial O_y),\tu{dist}(\mc{J}_f,E_y),y\in L_1\cup\cdots\cup L_{k-1}.
	\end{equation}
	Choose sufficiently small $(E_x,O_x,U_x)\in \mc{N}_x,x\in L_k$ such that the disks $U_x$ are $(\mu_k,\eta_0)$-backward stable.
	Moreover, we aim to collect some ``nice’’ components (i.e., they satisfy Claims~1 and 2) among the family $\cup_{x\in L_k}\cup_{i\geq 0}\tu{Comp}(f^{-i}U_x)$.
	
	The construction of families $\mc{U}_{k,i}$ and $\mc{X}_{k,i}$ is done by induction on level $i$. 
	To do this, we begin by setting $\mc{U}_{k,0}=\{U_x\}_{x\in L_k}$ and $\mc{X}_{k,0}=L_k$. Then inductively, for $i=1,2,\cdots$, if a component $U$ of $f^{-1}U_{x_{i-1}}$ with $U_{x_{i-1}}\in \mc{U}_{k,i-1}$ and $x_{i-1}\in\mc{X}_{k,i-1}$ covers a point $y\in L_1\cup \cdots \cup L_{k-1}$, then
	\begin{equation}\label{eq:hidden}
	L_k\ni f^{i-1}(x_{i-1})=x_0\prec y\tu{~~~and~~~}\tu{diam}(U)\leq \mu_k\leq \tu{dist}(y,\partial D_y'').
	\end{equation}
	It follows that $U\subseteq D_y''.$ We refer to such elements in $f^{-1}\mc{U}_{k,i-1}$ as \emph{hidden} components, with the others in $f^{-1}\mc{U}_{k,i-1}$ being \emph{non-hidden} components. We point out that all elements in $\mc{U}_{1}$ and $\mc{U}_{k,0}$ are non-hidden components.
	Thus, we can define $\mc{U}_{k,i}$ to be the collection of all \tb{non-hidden}  elements within the family $f^{-1}\mc{U}_{k,i-1}$, and that all of them have the same level $i$.
	\vskip 0.3cm
	\tb{Claim~1$'$.} Let $\mc{X}_{k, i}:=f^{-i}(L_k)\cap\left(\cup\{U\in\mc{U}_{k,i}\}\right)$. Then, there is a one-to-one correspondence between the families $\mc{X}_{k,i}$ and $\mc{U}_{k,i}$.
	\begin{proof}
		We must prove that each component $U$ in $\mc{U}_{k,i}$ contains exactly one point in $f^{-i}L_k$. Let $U_{x_0}:=f^{i}U\in \mc{U}_{k,0}$ with $x_0\in L_k$. Consider the pullback
		$$g_{x_ix_0}:=f^i:U_{x_i}\to U_{x_0}\hspace{0.4cm} x_i\mt x_0,$$
		where $x_i$ is an arbitrary point among the preimage of $g^{-1}_{x_ix_0}(x_0)$ and $U_{x_i}:=U$. It is enough to show that $\tu{Crit}(g_{x_ix_0})\subseteq \{x_i\}$ or equivalently that the possible critical points of $g_{x_ix_0}$ can be only $x_i$. To see this, if not , the orbit
		$$U_{x_i}\to U_{x_{i-1}}\to\cdots\to U_{x_1}(\to U_{x_0})$$
		will cover a critical point $c$ of $f$, which avoids the set $\{x_i,\cdots,x_1\}$, say $c\in U_{x_{i_0}}$. Since $c$ is contained in $X_0$.  Thus, by (B2) we have that $x_0\prec c$. This means that $U_{x_{i_0}}$ contains a point in $L_1\cup \cdots\cup L_{k-1}$. Thus, $U_{x_{i_0}}$ is a hidden component, which is contradicting because $U_{x_{i_0}}\in \mc{U}_{k,i_0}$. Hence, the claim is complete.
	\end{proof}
	\vskip 0.3cm
	\tb{Claim~2$'$.} Let $\mc{X}_k:=\cup_{i\geq 0}\mc{X}_{k,i}$ and $\mc{U}_k:=\cup_{i\geq 0}~\mc{U}_{k,i}$. Then, for any disks $U_{x_i},U_{y_j}\in \mc{U}_1\cup\cdots\cup \mc{U}_{k}$ with $x_i\neq y_j\in \mc{X}_1\cup\cdots\cup \mc{X}_k$ and with $i=n(x_i)\leq n(y_j)=j$, we have that $x_i\not\in U_{y_j}$, i.e., the elements of $\mc{U}_k$ with higher levels cannot contain points of $\mc{X}_k$ with lower levels.
	\begin{proof}
		If $i=j$, it follows directly  from the fact that $U_{x_0}(=f^{i}U_{x_i})$ and $U_{y_0}(=U_{y_j})$ are disjoint. If $i<j$, assume $x_i\in U_{y_j}$, whereupon $f^{j-i}(x_0)\in U_{y_0}$. It follows that $y_0\prec x_0$ and $f^{i}U_{y_j}$, which is equal to $U_{y_{j-i}}(\in\mc{U}_{j-i})$, covers $x_0$ in $X_0$. Thus, $U_{y_{j-i}}$ is a hidden component, which is a contradiction. Hence, the claim is completed.
	\end{proof}
	
	Now for all $x\in L_k$ we pick an open disk $D_x'$ centered at $x$ and nested in $O_x\setminus \ol{E}_x$ such that
	$\tu{mod~}(O_x\setminus\ol{D'}_x)\geq \eta_0m.$ Then, consider the set
	\begin{equation}\label{eq:omega}
	\Omega_x:=D_x'\setminus\ol{\cup\{U_y\in \mc{U}_1\cup\cdots\cup \mc{U}_k\setminus\{U_x\},U_y\cap \partial D_x'\neq\emptyset\}}.
	\end{equation}
	As a consequence of Claim~2$'$ and of the fact that $\tu{diam~}U_y\to 0$ as $n(y)\to 0$, there exists a unique component of $\Omega_x$ containing $x$. Let $D_x''$ be an open disk centered at $x$ and compactly contained in this component. 	
	
	After the above construction from $k=1$ to $N$, we obtain the sets $\mc{X}:=\mc{X}_1\cup \cdots\cup\mc{X}_N$, $\mc{U}:=\mc{U}_1\cup\cdots\cup\mc{U}_N$ and $\{(D_x'',D_x',E_x,O_x,U_x)\}_{x\in X_0}$. We claim and check the following statements.
	\vskip 0.3cm
	\tb{Claim~3.} The families $\mc{X}$ and $\mc{U}$ are forward invariant, that is, $\mc{X}\subseteq f(\mc{X})$ and $\mc{U}\subseteq f(\mc{U})$.
	\begin{proof}
		This is a direct consequence of the fact that the images of any non-hidden components in $\mc{U}$ with levels of at least one are still non-hidden.
	\end{proof}
	
	For any $x_i\in\mc{X}$ of level $i\geq 1$, we define $(D_{x_i}'',D_{x_i}',O_{x_i})$ as the pullback of $(D_{x_0}'',D_{x_0}',O_{x_0})$ under the covering $g_{x_ix_0}:U_{x_i}\to U_{x_0}$, while $E_{x_i}$ is an arbitrary component of $g_{x_ix_0}^{-1}(E_{x_0})$.
	\vskip 0.3cm
	\tb{Claim~4.} For any $n\geq 0$, let $\mc{X}_{\leq n}:=\{x\in\mc{X}:n(x)\leq n\}$. Then, we have
	$$\cup_{x\in X_0}\cup_{0\leq i\leq n}f^{-i}D_x''\subseteq \cup_{y\in\mc{X}_{\leq n}}D_y''.$$
	\begin{proof}
		For any $x_0\in X_0,0\leq i\leq n$, let $D''_{x_i}$ and $U_{x_i}$ be components of $f^{-i}D_{x_0}''$ and $f^{-i}U_{x_0}$, respectively, such that $f^{i}(x_i)=x_0$ and $x_i\in D''_{x_i}\subseteq U_{x_i}$. If $U_{x_i}$ is hidden, by definition it contains a point $y\in X_0=\mc{X}_{\leq 0}$ such that $x_0\prec y$. Thus, $D''_{x_i}\subseteq U_{x_i}\subseteq D_{y}''$ from \eqref{eq:hidden}. By induction, for $1\leq k\leq n$, we can assume that $f^k(U_{x_i})(=U_{x_{i-k}})$ is hidden and $f(U_{x_i})(=U_{x_{i-1}})\subseteq D''_y\subseteq U_y$ for some $y\in \mc{X}_{k-1}$. Let $y'\in D_{y'}''\subseteq U_{y'}$ be the pullback of $y,D_y'',U_y$ under the mapping $f:U_{x_i}\to U_{x_{i-1}}$ such that $U_{x_i}\subseteq D_{y'}''$. If $U_{y'}$ is non-hidden, we are done because  $y'\in\mc{X}_{\leq k}$. Otherwise, $U_{y'}$ is hidden and contained in some $D_z''$ with $z\in \mc{X}_{\leq 0}$ as per the above discussion. It follows that $D_{x_i}\subseteq U_{x_i}\subseteq D''_{y'}\subseteq U_{y'}\subseteq D_z''$. Hence, the claim is complete.
	\end{proof}
	\vskip 0.3cm
	\tb{Claim~5.} For any distinct points $x_i,y_j\in \mc{X}$ of levels $i,j$ with $i\leq j$, consider the disks $$(D_{z}'',D_{z}',E_{z},O_{z},U_{z}),{z\in\{x_i,y_j\}}.$$ We have the following.
	\begin{itemize}
		\item[(1)] If $U_{y_j}\cap D_{x_i}''\neq \es$, then $U_{y_j}\subseteq D_{x_i}'$.
		\item[(2)] If $U_{y_j}\cap \partial U_{x_i}\neq \es$, then $U_{y_j}\cap O_{x_i}=\es$.
		\item[(3)] $U_{y_j}\cap E_{x_i}=\es$.
		\item[(4)] $\tu{Shape}(E_{x_i},\xi_{x_i})<C$ for some $\xi_{x_i}\in E_{x_i}$ and $\tu{diam}(U_{x_i})\leq C\tu{diam}(E_{x_i})$, where $C$ is a universal constant that is independent of $x_i$.
		\item[(5)] $\tu{mod}(O_{x_i}\setminus\ol{D'}_{x_i})\geq m$.
	\end{itemize}
	\begin{proof}
		Assume $x_0:=f^i(x_i)\in L_{k_i}$ and $y_0=f^j(y_j)\in L_{k_j}$.
		
		(1) By definition, after the action of $f^{i}$, we have $U_{y_{j-i}}\cap D_{x_0}''\neq \es$. If $k_i<k_j$.  According to the choice of $\mu_k$, it follows that $\tu{diam}(U_{y_{j-i}})\leq \tu{dist}(\partial D_{x_0}'',\partial D_{x_0}')$. This implies that $U_{y_{j-i}}\subseteq D_{x_0}'$. Otherwise, we have $k_i\geq k_j$. By the definition of $\Omega_{x_0}$ [see \eqref{eq:omega}], the component $U_{y_{j-i}}$ cannot meet both of the boundaries of $ D_{x_0}'$ and $D_{x_0}''$. It follows that $U_{y_{j-i}}\subseteq D'_{x_0}$. Thus, $(1)$ holds.
		
		(2) We have $U_{y_{j-i}}\cap \partial U_{x_0}\neq \emptyset$. It is enough to show that $U_{y_{j-i}}\cap O_{x_0}=\es$. If $y_0\prec x_0$, then $k_i<k_j$. By the choice of $\mu_{k_j}$ [see \eqref{eq:mu}], we have that $\tu{diam}(U_{y_{j-i}})<\tu{dist}(\partial U_{x_0},\partial O_{x_0})$. Thus, $U_{y_{j-i}}\cap O_{x_0}=\es$. Otherwise, we have $y_0\in X_{\nprec x_0}$. Because $U_{y_0}\subseteq B(y_0,\delta_1)$, from Lemma~\ref{lema_x_0}(4), one can obtain $U_{y_{j-i}}\cap O_{x_0}=\es$. Hence, statement (2) is proved.
		
		(3) It is enough to show that $U_{y_{j-i}}\cap E_{x_0}=\es$. If $y_0\prec x_0$, then $k_i<k_j$. The choice of $\mu_{k_j}$ gives that $\tu{diam}(U_{y_{j-i}})\leq \mu_{k_j}\leq\tu{dist}(E_{x_0},\mc{J}_f)$. Because $y_{j-i}\in\mc{J}_f$ and $E_{x_0}\subseteq\mc{F}_f$, we have $U_{y_{j-i}}\cap E_{x_0}=\es$. Otherwise, we have $y_0\in X_{\nprec x_0}$. By Lemma~\ref{lema_x_0}(3), the conclusion follows.
		
		Concerning statements (4) and (5), we need consider only the branched covering
		$$g_{x_ix_0}:=f^i:(\xi_{x_i},E_{x_i}, D'_{x_i},O_{x_i},B_{x_i}'',B'_{x_i})\to (\xi_{x_0},E_{x_0},D'_{x_0},O_{x_0},B(x_0,\delta_1),B(x_0,\delta_0))$$, 
		where $B_{x_i}'$ and $B_{x_i}''$ are the components of $f^{-i}B(x_0,\delta_1)$ and $f^{-i}B(x_0,\delta_0)$ containing $E_{x_i}$, respectively . Note that
		$$\tu{deg}(g_{x_ix_0})\leq \eta_0\tu{ and }\tu{mod}(B(x_0,\delta_0)\setminus\ol{B(x_0,\delta_1)})\geq \frac{\tu{log~} 2}{2\pi}$$ according to the choice of $\delta_0$ and $\delta_1$. We apply Lemmas~\ref{controlling_holomorphic_distortion} and \ref{lema_x_0}(2) to $g_{x_ix_0}$. Then, statements (4) and (5) hold.
	\end{proof}
	
	Moreover, we aim to construct a nested disk system $\{D_x\}_{x\in \mc{X}_{\leq n}}$ for any given $n\geq 0$. Recalling $\mc{X}_{\leq n}:=\{x\in\mc{X},n(x)\leq n\}$. The construction proceeds by induction.
	
	For every $x\in \mc{X}_{\leq n}$ with $n(x)=n$, let $D_x:=U_x$. Inductively, for each $x\in \mc{X}_{\leq n}$ with its level $k:=n(x)$ running from $n-1$ to 0, we set
	$$b(x):=\{y\in \mc{X}_{\leq n}: k+1\leq n(y)\leq n\tu{ and }D_y\cap \partial U_x\neq \es\}.$$
	According to Claim~5(2), any disks $D_y,y\in b(x),$ are disjoint from $O_x$. Then, there exists a unique component of $U_x\setminus\ol{\cup_{y\in b(x)}U_y}$ such that it contains $O_x$. Such a component is an open topological disk and is denoted by $D_x$. It follows immediately that $x\in D''_x\subseteq D_x'\subseteq O_x\subseteq D_x$.
	\vskip 0.3cm
	\tb{Claim~6.} The disk system $\{(D_x'',D_x',D_x)\}_{x\in \mc{X}_{\leq n}}$ is $m$-nested, $\lambda$-scattered in $W$.
	\begin{proof}
		First, from the above construction, it is clear that the disks $\{D_x\}_{x\in\mc{X}_{\leq n}}$ are nested. Moreover, by Claim~2$'$, for distinct $x_i,y_j\in \mc{X}_{\leq n}$ with levels $i,j$ satisfying $0\leq i\leq j\leq n$, we have either $D_{x_i}\cap D_{y_j}=\emptyset$ or $D_{y_j}\subseteq D_{x_i}$.
		
		Next, by Claim~5(5), the $m$-nested property follows directly from the inequality
		$$\tu{mod}(D_x\setminus\ol{D'}_x)\geq \tu{mod}(O_x\setminus\ol{D'}_x)\geq m, \tu{ for all }x\in \mc{X}_{\leq n}.$$
		
		We are yet to check the $\lambda$-scattered property. To do this, let $V_x$ be the union of all disks $D_y$ such that $D_y\subseteq D_x$. According to Claims~2$'$ and 5, we must have $n(y)>n(x)$ and $D_y\cap E_x=\emptyset$ for such $y$ . Combining with Claim~5(4) and Lemma~\ref{lema_area}, for $x\in \mc{X}_{\leq n}$, we compute
		\begin{equation}\notag
		\begin{split}
		\tu{Area}(\rho_*,h(V_x))&\leq\tu{Area}(\rho_*,h(D_x))-\tu{Area}(\rho_*,h(E_x))\\
		&\leq\tu{Area}(\rho_*,h(D_x))-\lambda'\tu{Area}(\rho_*,h(U_x))\\
		&\leq\tu{Area}(\rho_*,h(D_x))-\lambda'\tu{Area}(\rho_*,h(D_x))\\
		&=\lambda\tu{Area}(\rho_*,h(D_x)),
		\end{split}
		\end{equation}
		where $\lambda'$ is the constant, as stated in Lemma~\ref{lema_area}. Hence, the claim is proved.
	\end{proof}
	Combining with Claim~4 , we thus complete the entire proof.
\end{proof}

The following distortion theorem is the most crucial result in the present paper. It is a direct consequence of Theorem~\ref{thm_nest} and Proposition~\ref{prop:nested_disk_system}.
\begin{thm}\label{thm:Cui_Tan}
	Let $f$ be a rational map with a non-empty Fatou set and no recurrent critical points, and let $X_0$ be a finite set satisfying properties~(A1)--(A4). Let $V$ be a Jordan disk in $\mathcal{F}_f$ containing three distinct points $z_1,z_2,z_3$. Then, for any $\epsilon>0$, there exists $\delta>0$ such that
	$$\mathop{\tu{sup}}_{z\in V}^{}\,\{\tu{dist}\,(\phi(z),z)\}\leq \epsilon$$
	for any $n\geq 0$ and any univalent map $\phi:\olC\setminus\ol{\cup_{x\in X_n} B_x}\to \olC$ fixing $z_1,z_2,z_3$ and any $n\geq0$, where $B_x$ is the component of $f^{-n(x)}B(x',\delta)$ containing $x$ with $x':=f^{n(x)}(x)\in X_0$.
\end{thm}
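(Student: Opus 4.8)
The strategy is to reduce the statement directly to Theorem~\ref{thm_nest} via Proposition~\ref{prop:nested_disk_system}. The main point is to choose the parameter $m$ (the nesting modulus) large enough, depending on the desired $\epsilon$, and then to extract from Proposition~\ref{prop:nested_disk_system} the auxiliary nested disk systems whose innermost disks $D_x''$ cover the relevant preimages $B_x$ of small round disks around the points of $X_0$.

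First I would fix an open set $W$ with $\olC\sm\ol W\neq\es$ and with $\ol{X}_\infty\subseteq W$; since $V\subseteq\mc{F}_f$ and the points of $X_\infty$ lie in $\mc{J}_f$, one can moreover arrange $\ol V\subseteq\olC\sm\ol W$ (shrinking $W$ near $V$ if necessary, and using that $\mc{J}_f$ is closed and disjoint from $\ol V$). Given $\epsilon>0$, Theorem~\ref{thm_nest} supplies a threshold: there is $m_0=m_0(\epsilon,W,V)$ such that for every $m\geq m_0$ and every $m$-nested, $\lambda$-scattered disk system $\{(D_x'',D_x',D_x)\}$ in $W$ (with $\lambda$ the universal constant from Proposition~\ref{prop:nested_disk_system}), every univalent map $\phi$ on $\olC\sm\ol{\cup D_x''}$ fixing $z_1,z_2,z_3$ satisfies $\sup_{z\in V}\tu{dist}(\phi(z),z)\leq\epsilon$. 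Fix such an $m\geq m_0$.

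Next I would apply Proposition~\ref{prop:nested_disk_system} with this $m$ and this $W$: for every $n\geq0$ it produces a subset $\mc{X}_{\leq n}\subseteq X_n$ and an $m$-nested, $\lambda$-scattered system $\{(D_x'',D_x',D_x)\}_{x\in\mc{X}_{\leq n}}$ in $W$ with the covering property $\bigcup_{x\in X_n}W_x\subseteq\bigcup_{x\in\mc{X}_{\leq n}}D_x''$, where $W_x$ is the component of $f^{-n(x)}W_{x_0}$ through $x$ and $W_{x_0}$ is \emph{any} disk nested in $D''_{x_0}$. The quantity $\delta$ in the theorem is then produced by taking, for each $x_0\in X_0$, a radius so small that $B(x_0,\delta)\subseteq D''_{x_0}$; let $\delta$ be the minimum of these finitely many radii (note the disks $D''_{x_0}$, $x_0\in X_0$, were chosen in the proof of Proposition~\ref{prop:nested_disk_system} independently of $n$, so $\delta$ does not depend on $n$). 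With $W_{x_0}:=B(x_0,\delta)$ the sets $B_x$ of the theorem coincide with the sets $W_x$ of the proposition, hence $\cup_{x\in X_n}B_x\subseteq\cup_{x\in\mc{X}_{\leq n}}D''_x$. Consequently any univalent $\phi:\olC\sm\ol{\cup_{x\in X_n}B_x}\to\olC$ restricts to a univalent map on the smaller domain $\olC\sm\ol{\cup_{x\in\mc{X}_{\leq n}}D''_x}$, and Theorem~\ref{thm_nest} applies verbatim to give $\sup_{z\in V}\tu{dist}(\phi(z),z)\leq\epsilon$. Since $\delta$ was chosen uniformly in $n$, this is exactly the assertion.

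The only genuine subtlety — and the step I would be most careful about — is the uniformity in $n$: one must make sure that the disks $D''_{x_0}$ (for $x_0\in X_0$), and hence $\delta$, can be fixed once and for all before $n$ is chosen. This is guaranteed by the construction in Proposition~\ref{prop:nested_disk_system}, where the family $\{(D_x'',D_x',E_x,O_x,U_x)\}_{x\in X_0}$ is built first, by the inductive procedure over $L_1,\dots,L_N$, and only afterwards are the nested disks $\{D_x\}_{x\in\mc{X}_{\leq n}}$ assembled for a given $n$; the constant $\lambda$ is likewise universal. A secondary point to verify is that $\phi$ being univalent merely on $\olC\sm\ol{\cup_{x\in X_n}B_x}$ (rather than on the full complement of the $D''_x$) is no loss, which is immediate since that domain contains $\olC\sm\ol{\cup_{x\in\mc{X}_{\leq n}}D''_x}$ and univalence is inherited by restriction to a subdomain. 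Everything else is bookkeeping.
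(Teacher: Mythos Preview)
Your proposal is correct and follows precisely the route the paper indicates: the paper itself states that Theorem~\ref{thm:Cui_Tan} is ``a direct consequence of Theorem~\ref{thm_nest} and Proposition~\ref{prop:nested_disk_system},'' and you have spelled out exactly that deduction, including the key observation that the disks $D''_{x_0}$ for $x_0\in X_0$ are built in Proposition~\ref{prop:nested_disk_system} before any $n$ is chosen, so $\delta$ is uniform in $n$. Your handling of the domain-restriction step and of the choice of $W$ with $\ol V\subseteq\olC\sm\ol W$ is also correct.
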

\section{Proof of the main result}

\begin{proof}[Proof of Theorem~\ref{main_thm}]
	Figure~\ref{fig_relation} illustrates the idea of the proof. We will present the proof in four steps.
	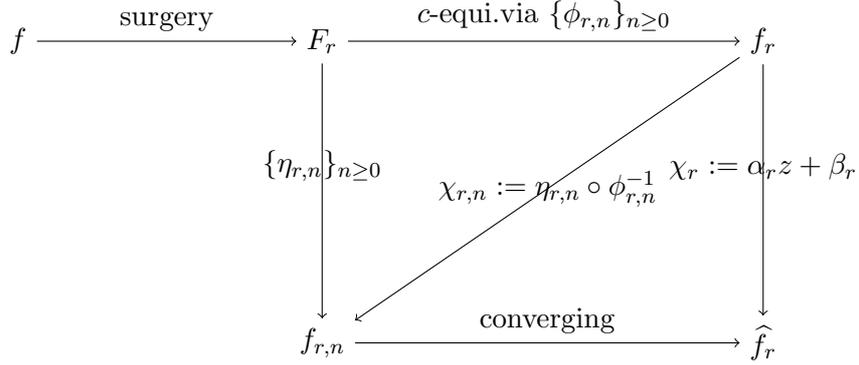
\begin{figure}[htpb]
		\centering
		\begin{tikzpicture}
		
		\node (fr) at (-5,2) {$f$};
		\node (Frdelta) at (-1,2) {$F_{r}$};                       \node (frdelta) at (4.8,2)  {$f_{r}$};
		\node (fdeltan) at (-1,-2){$f_{r,n}$};                       \node (hatfdelta) at (4.8,-2) {$\wh{f}_r$};
		\draw [->] (fr) -- node [below]{} node [above]{surgery} (Frdelta);
		\draw [->] (Frdelta) -- node [below]{} node [above]{$c$-equi.via $\{\phi_{r,n}\}_{n\geq 0}$}(frdelta);
		\draw [->] (Frdelta) -- node [above]{$\{\eta_{r,n}\}_{n\geq0}$}node [below]{}(fdeltan);
		\draw [->] (frdelta) -- node [above]{$\chi_{r}:=\alpha_r z+\beta_r$} node [below]{}(hatfdelta);
		\draw [->] (fdeltan) -- node [above]{converging} node [below]{}(hatfdelta);
		\draw [->] (frdelta) -- node {$\chi_{r,n}:=\eta_{r,n}\circ\phi_{r,n}^{-1}$} (fdeltan);
		\end{tikzpicture}
		\caption{Relation of maps  in the proof of Theorem~$\ref{main_thm}$.}
	\end{figure}\label{fig_relation}

	\noindent\textbf{Step~I.} Surgery to construct topological polynomials $F_{r}$.\\
	
	Denoted by $V_r$, is the bounded component surrounded by the Jordan curve $\psi_f^{-1}(e^{r}e^{i[0,2\pi]})$.
	For each critical value $v$ of $f$, there is a unique argument $\theta$ in $m_d(\Theta)$ such that the external ray $R_f(\theta)$ lands at $v$. Subsequently, we choose a point $v_r:=\psi_f^{-1}(e^re^{2\pi i\theta})\in R_f(\theta)$ and a Jordan disk $W_{r,v}$ containing $v$ such that
	\begin{itemize}
		\item $\tu{diam}\,W_{r,v}\to 0$ as $r\to 0$,
		\item $W_{r,v}\subseteq f(V_r)$,
		\item the disks $W_{r,v}$ for distinct critical values are pairwise disjoint.
	\end{itemize}
	
	Let $\zeta_r:\olC\to\olC$ be a quasi-conformal map that sends each critical value $v$ to $v_r$ within $W_{r,v}$ and is the identity outside the finitely many disks $W_{r,v}$. The topological polynomial $F_r$ is defined as
	$F_r:=\zeta_r\circ f$. For each critical point $c$ with $f(c)=v$, we denote it by $W_{r,c}$ (the component of $f^{-1}W_{r,v}$ containing $c$). Let $W_r:=\cup_{c\in\tu{Crit}(f)}W_{r,c}$. It is easy to check the following facts:
	\begin{itemize}
		\item $F_r$ is quasi-regular, and $F_r=f$ on $\olC\setminus W_r$;
		\item $\tu{Crit}(F_r)=\tu{Crit}(f)\subseteq \partial F_r^{-1}V_r$;
		\item all critical points of $F_r$ escape to infinity at the same speed.
	\end{itemize}

	\vskip 0.2cm
	
	\noindent\textbf{Step~II.} The topological polynomial $F_{r}$ is $c$-equivalent to $f_r$.\\ 
	
	As we mentioned in Section~\ref{sec:critical-portrait}, for any $r>0$, there exists a unique polynomial $f_r(\Theta)\in \mathcal{S}_d$ with the critical portrait $\Theta$ and escaping rate $r$ for all critical values. For simplicity, we write $f_r(\Theta)$ as $f_r$ because $\Theta$ is given. We will show that the topological polynomial $F_r$ constructed in Step~I has dynamics that are very similar to those of $f_r$.
	
	For any $r>0$, we denote by $V'_r$ the bounded component surrounded by the Jordan curve $\psi_{f_r}^{-1}(e^re^{i[0,2\pi]})$.
	Set $U_r:=F_r^{-1}V_r$ and $U'_r:=f_r^{-1}V_r'$. Note that both of them  comprise $d$ Jordan disks because all critical values of $F_r$ (resp.\,$f_r$) are contained in $\partial V_r$ (resp.\,$\partial V_r'$). We define a quasi-conformal homeomorphism $\phi_{r,0}:\olC\to \olC$ by
	\begin{equation}\label{eq:phi}
	\phi_{r,0}:=
	\begin{cases}
	\psi_{f_{r}}^{-1}\circ\psi_{f}(z) & \text{if $z \in \olC\setminus V_r,$} \\
	h_r(z) & \text{if $z\in {V}_r,$}\\
	\end{cases}
	\end{equation}
	where $h_r:V_r\to V_r'$ is a quasi-conformal map that agrees with $\psi_{f_r}^{-1}\circ\psi_f(z)$ on $\partial V_r$. Because $\psi_f$ and $\psi_{f_r}$ are B\"{o}ttcher coordinates of $f$ and $f_r$, respectively, the following commutative graph holds:
	\begin{equation}\label{eq:commutative3}
	\begin{CD}
	{\olC\setminus V_r}@> {\phi_{r,0}} >> {\olC\setminus V_r'}\\
	@ V{F_{r}} VV @ VV {f_{r}}V\\[-3pt]
	{\olC\setminus V_r}@> {\phi_{r,0}} >> {\olC\setminus V_r'.}\\
	\end{CD}
	\end{equation}
	
	We claim that $\phi_{r,0}$ can be lifted along $F_r$ and $f_r$ by the following graph:
	\[
	\begin{CD}
	{( U_r,\ \olC\setminus U_r)}@> {\phi_{r,1}} >> {( U_r',\ \olC\setminus U_r')}\\
	@ V{F_{r}} VV @ VV {f_{r}}V\\
	{(V_r,\ \olC\setminus V_r)}@> {\phi_{r,0}} >> {(V_r',\ \olC\setminus V_r').}\\
	\end{CD}
	\]
	Indeed, because $F_r$ (resp.\,$f_r$) sends any components of $U_r$ (resp.\,$U_r'$) homeomorphically onto $V_r$ (resp.\,$V_r'$), there exists a unique lift $\wt{h}_r:U_r\to U_r'$ of $h_r$ (defined in \eqref{eq:phi}). In contrast, the restriction of $F_r$ (resp.\,$f_r$) on $\olC\setminus {U_r}$ (resp.\,$\olC\setminus {U_r'}$) is a $d:1$ branched covering with only one critical point at infinity. Together with \eqref{eq:commutative3}, we can choose a unique lift $$\phi_{r,1}:\olC\setminus{U_r}\to\olC\setminus{U_r'}$$
	of $\phi_{r,0}:\olC\sm {V_r}\to \olC\sm {V_r'}$ such that $\phi_{r,1}=\phi_{r,0}$ on $\olC\setminus V_r$. It is easy to see that $\phi_{r,1}=\wt{h}_r$ on ${\partial U_r}$. Thus, $\phi_{r,1}$ can be extended to a quasi-conformal homeomorphism on $\olC$ by setting $\phi_{r,1}:=\wt{h}_r$ on $U_r$, and it follows that $\phi_{r,0}\circ F_r=f_r\circ \phi_{r,1}$ on $\olC$.
	
	By this construction, the homeomorphism $\phi_{r,0}$ is holomorphic and coincides with $\phi_{r,1}$ on $\olC\setminus V_r$, which contains the post-critical set of $F_r$. By contrast, within $V_r$, $\phi_{r,0}$ and $\phi_{r,1}$ are isotopic rel.  ${\rm post}(F_r)$ by Alexander's trick. Following \cite{CT11}, such a pair of branched coverings $F_r, f_r$ is called a \emph{$c$-equivalence} via $\phi_{r,0},\phi_{r,1}$.
	
	Let $H_0:\olC\times [0,1]\to \olC$ rel.  $\olC\setminus V_r$ be an isotopy from $\phi_{r,0}$ to $\phi_{r,1}$. Inductively,
	for $n\geq 0$, there exists a unique lift $H_{n+1}$ of $H_{n}$ such that $H_{n+1}(\cdot,0)=\phi_{r,n}$. Let $\phi_{r,n+1}:=H_{n+1}(\cdot,1)$. Then we get a sequence of homeomorphisms $\{\phi_{r,n}\}_{n\geq 0}$ such that
	\begin{itemize}
		\item $\phi_{r,n}\circ F_r=f_r\circ \phi_{r,n+1}$ [see \eqref{eq:commutative1}],
		\item $\phi_{r,n}=\phi_{r,n+1}$ on $\bigcup_{0\leq k\leq n}F_r^{-k}(\olC\setminus V)$.
	\end{itemize}
	\begin{equation}\label{eq:commutative1}
	\begin{CD}
	{\olC} @> {\cdots} >> {\olC}@>{F_r}>>{\olC}@>{\cdots}>>{\olC}@>{F_r}>>{\olC}@>{F_r}>>{\olC} \\
	@ VV{\cdots}V @ VV {\phi_{r,n+1}} V@VV {\phi_{r,n}}V@VV {\phi_{r,2}}V@VV {\phi_{r,1}}V@VV {\phi_{r,0}}V \\
	{\olC} @> {\cdots} >> {\olC}@>{f_r}>>{\olC}@>{\cdots}>>{\olC}@>{f_r}>>{\olC}@>{f_r}>>{\olC} \\
	\end{CD}
	\end{equation} 
	
	\vskip 0.2cm
	\noindent \textbf{Step~III.} Thurston algorithm on $F_r$\\
	
	Let $\eta_{r,0}:=\tu{id }$. Then, $F_r\circ\eta_{r,0}$ defines a complex structure on $\olC$ by pulling back the standard complex structure. The uniformization theorem guarantees the existence of a unique homeomorphism $\eta_{r,1}:\olC\to \olC$, normalized by fixing $a_1,a_2,\infty$ , with $a_1,a_2$ close to infinity, such that $f_{r,0}:=\eta_{r,0}\circ F_{r}\circ\eta_{r,1}^{-1}$ is holomorphic. Note that $\eta_{r,1}$ is holomorphic except on $W_r$.
	
	Recursively, there exist a quasi-conformal map $\eta_{r,n+1}$ and a polynomial $f_{r,n+1}$ for $n\geq 0$ such that
	
	\begin{itemize}
		\item[(1)] $\eta_{r,n}\circ F_{r}=f_{r,n}\circ\eta_{r,n+1}$ [see \eqref{eq:commutative2}],
		\item[(2)] $\eta_{r,n+1}$ is univalent on $\olC\sm \bigcup_{0\leq i\leq n} F_r^{-i}(W_r)=\olC\sm \bigcup_{0\leq i\leq n} f^{-i}(W_r)$,
		\item[(3)] $\eta_{r,n+1}$ fixes $a_1$, $a_2$, and infinity.
	\end{itemize}
	\begin{equation}\label{eq:commutative2}
	\begin{CD}
	{\olC} @> {\cdots} >> {\olC}@>{F_r}>>{\olC}@>{\cdots}>>{\olC}@>{F_r}>>{\olC}@>{F_r}>>{\olC} \\
	@ VV{\cdots}V @ VV {\eta_{r,n+1}} V@VV {\eta_{r,n}}V@VV {\eta_{r,2}}V@VV {\eta_{r,1}}V@VV {\eta_{r,0}}V \\
	{\olC} @> {\cdots} >> {\olC}@>{f_{r,n}}>>{\olC}@>{\cdots}>>{\olC}@>{f_{r,1}}>>{\olC}@>{f_{r,0}}>>{\olC} \\
	\end{CD}
	\end{equation} 
	
	Let $V\subseteq \olC\setminus\ol{V_r}$ be an open disk containing $a_1,a_2,\infty$. Because of properties (2) and (3) mentioned above, Theorem~\ref{thm:Cui_Tan} gives a crucial distortion estimate, namely
	\begin{equation}\label{equ:distortion}
	\mathop{\tu{sup}}^{}_{n\geq 0,z\in V}\tu{dist}\,(\eta_{r,n}(z),z)\to0\tu{\; as \;}r\to0.
	\end{equation}
	Note that $F_r=f$ on $\olC\setminus V_r\supset V$, then \eqref{eq:commutative2} and \eqref{equ:distortion} imply
	\begin{equation}\label{equ:distortion_f}
	\mathop{\tu{sup}}^{}_{n\geq 0,z\in V}\tu{dist}\,(f_{r,n}(z),f(z))\to0\tu{\; as \;}r\to0.
	\end{equation}
	
	\vskip 0.2cm
	\noindent\textbf{Step~IV.} The parameter ray $R_{\mathcal{C}_d}(\Theta)$ lands at $f$. \\
	
	Let $z_{r,i}:=\phi_{r,n}(a_i),i\in\{1,2\}$. They are independent of $n$  because $\phi_{r,n}=\phi_{r,0}$ on $\mathbb{C}\setminus V_r\supset V$ for all $n\geq 0$.
	It follows that each map $\chi_{r,n}:=\eta_{r,n}\circ\phi^{-1}_{r,n}$ with $n\geq0$ sends $\infty,z_{r,1},z_{r,2}$ to $\infty, a_1,a_2$, respectively. Combining \eqref{eq:commutative1} and \eqref{eq:commutative2}, we obtain the following commutative graph:
	\[\begin{CD}
	{\olC} @> {\cdots} >> {\olC}@>{f_r}>>{\olC}@>{\cdots}>>{\olC}@>{f_r}>>{\olC}@>{f_r}>>{\olC} \\
	@ VV{\cdots}V @ VV {\chi_{r,n+1}} V@VV {\chi_{r,n}}V@VV {\chi_{r,2}}V@VV {\chi_{r,1}}V@VV {\chi_{r,0}}V \\
	{\olC} @> {\cdots} >> {\olC}@>{f_{r,n}}>>{\olC}@>{\cdots}>>{\olC}@>{f_{r,1}}>>{\olC}@>{f_{r,0}}>>{\olC} \\
	\end{CD}\]
	Note that $\chi_{r,0}$ is quasi-conformal on $\olC$ and holomorphic on $\olC\setminus V_r$, then the diagram implies that each $\chi_{r,n}$ is quasi-conformal on $\olC$ with a uniformly bounded dilation $K_r$ and is holomorphic on $\bigcup_{0\leq k\leq n}f_r^{-k}(\olC\setminus V_r)$. Thus $\{\chi_{r,n}\}_{n\geq0}$ is a normal family because each $\chi_{r,n}$ sends $\infty,z_{r,1},z_{r,2}$ to $\infty, a_1,a_2$ respectively.
	
	Let $\chi_r:\olC\to\olC$ be any limit of a subsequence of $\{\chi_{r,n}\}$. Then, it is $K_r$-quasi-conformal on $\olC$ and is holomorphic on $\bigcup_{k\geq 0}f_r^{-k}(\mathbb{C}\setminus V_r)$, which is exactly the entire Fatou set of $f_r$. It follows that $\chi_r$ is holomorphic on $\olC$ because $\mathcal{J}_{f_r}$ is removable. Thus, one can write $\chi_r$ as the affine map
	\begin{equation}\notag
	\chi_r:z\mt\alpha_{r}z+\beta_r,
	\end{equation}
	which maps $z_{r,i}$ to $a_i$, $i=1,2$. Applying the argument above to any convergence   subsequence of $\{\chi_{r,n}\}_{n\geq0}$, we have that the entire sequence $\{\chi_{r,n}\}_{n\geq 0}$ uniformly converges to $\chi_{r}$ on $\olC$.
	As a consequence, the Thurston sequence $\{f_{r,n}\}_{n\geq 0}$ uniformly converges to the polynomial
	\begin{equation}\label{eq:conjugacy}
	\wh{f}_r:=\chi_r\circ f_r\circ\chi_r^{-1}.
	\end{equation}
	Estimate~\eqref{equ:distortion_f} implies that $\wh{f}_r\to f$ as $r\to0$ on $V$, and Lemma~\ref{convergence_of_rational_maps} further tells us that $\wh{f}_r\to f$ as $r\to0$ on $\olC$.
	
	The landing of parameter ray $R_{\mathcal{C}_d}(\Theta)$ at $f$ is equivalent to $f_r\to f$ as $r\to0$, for which we only need to prove that
	\[\text{$\sup_{z\in\olC}\ \tu{dist}\,(\wh{f}_r(z),f_r(z))\to 0$\quad as\quad $r\to 0$}\]
	by the discussion above.
	By \eqref{eq:conjugacy}, it is enough to verify that $\chi_r\to {\rm id}$ as $r\to 0$, i.e., $\alpha_r\to 1$ and $\beta_r\to 0$ as $r\to0$. We write
	$$\wh{f}_r(z)=c_{r,d}z^d+c_{r,d-1}z^{d-1}+\cdots+c_{r,0}.$$
	Note that $f_r$ is monic and centered, then a simple computation implies
	\[
	\begin{cases}
	c_{r,d}\cdot\alpha_r^{d-1}=1, & \\
	d\cdot c_{r,d}\cdot\beta_r\cdot\alpha_{r}^{d-1}+c_{r,d-1}\cdot\alpha_r^{d-1}=0. & \\
	\end{cases}
	\]
	Because $\wh{f}_r\to f$, which is monic and centered, as $r\to 0$, it follows that
	$c_{r,d}\to 1$ and $c_{r,d-1}\to 0$ as $r\to 0$. Thus, $\alpha_r$ tends to the $(d-1)$-th root of unity   and $\beta_r\to 0$ as $r\to0$.
	Conversely, the Weierstrass convergence theorem and the chain rule give
	\begin{align}
	\alpha_r&=\chi_r'(\infty)=\mathop{\tu{lim}}^{}_{n\to\infty}\chi_{r,n}'(\infty)
	=\mathop{\tu{lim}}^{}_{n\to\infty}\eta_{r,n}'(\infty)\cdot(\phi_{r,n}^{-1})'(\infty)\notag\\
	&=\mathop{\tu{lim}}^{}_{n\to\infty}\eta'_{r,n}(\infty)\overset{\eqref{equ:distortion}}{\longrightarrow} 1\tu{\ \ \ as\ \ }r\to0.\notag
	\end{align}
	The proof of the theorem is now complete.
\end{proof}

\emph{Acknowledgment. } The authors would like very much to thank Professors Guizhen Cui and Lei Tan for introductions and many helpful suggestions. The second author
also wants to thank China Scholarship Council for supports.

\end{document}